\documentclass[a4paper,12pt,reqno]{amsart}
\usepackage{a4wide}
\usepackage{amsmath}
\usepackage{amssymb}
\usepackage{amsthm}
\usepackage{latexsym}
\usepackage{relsize}
\usepackage[english]{babel}
\usepackage{relsize}

\newtheorem{prop}[subsection]{Proposition}

\newtheorem{teor}[subsection]{Theorem}
\newtheorem{lema}[subsection]{Lemma}
\newtheorem{cor} [subsection]{Corollary}
\theoremstyle{definition}
\newtheorem{dfn} [subsection]{Definition}
\theoremstyle{remark}
\newtheorem{obs} [subsection]{Remark}
\newtheorem{exm} [subsection]{Example}

\newcommand{\Zng}{$\mathbb Z^n$-graded $S$-module}

\def\sdepth{\operatorname{sdepth}}
\def\qdepth{\operatorname{hdepth}}
\def\hdepth{\operatorname{hdepth}}
\def\depth{\operatorname{depth}}

\def\deg{\operatorname{deg}}
\def\lcm{\operatorname{lcm}}

\def\PP{\operatorname{P}}

\def\po#1#2{(#1)_#2}
\def\de{\delta}

\selectlanguage{english}

\numberwithin{equation}{section}

\begin{document}

\title{On the Hilbert depth of monomial ideals}
\author{Silviu B\u al\u anescu$^1$, Mircea Cimpoea\c s$^2$ and Christian Krattenthaler$^3$}
\date{}

\keywords{Stanley depth, Hilbert depth, depth, monomial ideal, squarefree Veronese}

\subjclass[2020]{05A18, 06A07, 13C15, 13P10, 13F20}



\footnotetext[1]{ \emph{Silviu B\u al\u anescu}, National University of Science and Technology Politehnica Bucharest, Faculty of
Applied Sciences, 
Bucharest, 060042, E-mail: silviu.balanescu@stud.fsa.upb.ro}
\footnotetext[2]{ \emph{Mircea Cimpoea\c s}, National University of Science and Technology Politehnica Bucharest, Faculty of
Applied Sciences, 
Bucharest, 060042, Romania and Simion Stoilow Institute of Mathematics, Research unit 5, P.O.Box 1-764,
Bucharest 014700, Romania, E-mail: mircea.cimpoeas@upb.ro,\;mircea.cimpoeas@imar.ro}
\footnotetext[3]{\emph{Christian Krattenthaler}, Fakult\"at f\"ur Mathematik,
Universit\"at Wien Oskar-Morgenstern-Platz 1 A-1090 Vienna, Austria, E-mail: Christian.Krattenthaler@univie.ac.at}

\begin{abstract}
Let $S=K[x_1,\ldots,x_n]$ be the ring of polynomials over a field $K$.
Given two monomial ideals $0\subset I\subsetneq J \subset S$, we present a new method to compute 
the Hilbert depth of $J/I$. As an application, we show that if $u\in S$ is a monomial regular of $S/I$, then 
$\qdepth(S/I)\geq \qdepth(S/(I,u))\geq \qdepth(S/I)-1.$

Also, we reprove the formula of the Hilbert depth of a squarefree Veronese ideal.
\end{abstract}


\maketitle

\section{Introduction}

Let $K$ be a field and $S=K[x_1,\ldots,x_n]$ the polynomial ring over $K$. We consider the standard grading on $S$.
Let $M$ be a finitely generated graded $S$-module. In \cite{uli}, Uliczka introduced a new invariant associated to $M$, called Hilbert depth. 
More precisely, the Hilbert depth of $M$, denoted by $\hdepth(M)$, is the maximal depth of a finitely generated graded $S$-module 
$N$ with the same Hilbert series as $M$; see \cite[Definition 3.1]{uli}. He also proved in \cite[Theorem~3.2]{uli} that
$$\hdepth(M)=\max\{r\;:\;(1-t)^rH_M(t)\text{ is non-negative }\}.$$
In \cite{bruns}, Bruns, Krattenthaler and Uliczka took a different approach regarding this invariant. 

But first, we need to recall the following definition:
Let $M$ be a \Zng. A \emph{Stanley decomposition} of $M$ is a direct sum $$\mathcal D: M = \bigoplus_{i=1}^rm_i K[Z_i],$$ as a 
$\mathbb Z^n$-graded $K$-vector space, where $m_i\in M$ is homogeneous with respect to $\mathbb Z^n$-grading, 
$Z_i\subset\{x_1,\ldots,x_n\}$ such that $m_i K[Z_i] = \{um_i:\; u\in K[Z_i] \}\subset M$ is a free $K[Z_i]$-submodule of $M$. 
We define $\sdepth(\mathcal D)=\min_{i=1,\ldots,r} |Z_i|$ and
$$\sdepth(M)=\max\{\sdepth(\mathcal D):\mathcal D\text{ is 
a Stanley decomposition of }M\}.$$
The number $\sdepth(M)$ is called the \emph{Stanley depth} of $M$.

Herzog, Vl\u adoiu and Zheng showed in \cite{hvz} that $\sdepth(M)$ can be computed in a finite number of steps if $M=I/J$, 
where $J\subset I\subset S$ are monomial ideals. In \cite{rin}, Rinaldo gave a computer implementation of this algorithm, 
in the computer algebra system $\mathtt{CoCoA}$ (cf. \cite{cocoa}). 
In \cite{apel}, Apel restated a conjecture firstly given by Stanley in \cite{stan}, namely that $$\sdepth(M)\geq\depth(M),$$ for any \Zng $\;M$. 
This conjecture proves to be false, in general, for 
$M=S/I$ and $M=J/I$, where $0\neq I\subsetneq J\subset S$ are monomial ideals; see \cite{duval}, but remains open for $M=I$.

Now, we return to the standard graded case and we let $M$ to be a finitely generated graded $S$-module. A Hilbert decomposition of
the Hilbert series $H_M(t)$ is a decomposition 
$$\mathcal H: H_M(t)=\sum_{i=1}^r \frac{t^{a_i}}{(1-t)^{b_i}},$$
where $a_i$ and $b_i$ are nonnegative integers. The Hilbert depth of $\mathcal H$ is 
$$\hdepth(\mathcal H)=\min\{b_i\;:\;1\leq i\leq r\}.$$
Bruns, Krattenthaler and Uliczka \cite{bruns} proved that 
$$\hdepth(M)=\max\{ \hdepth(\mathcal H)\;:\;\mathcal H\text{ is a Hilbert decomposition of }M.\}.$$
Also, they noted that if $M$ is a \Zng and $\mathcal D: M = \bigoplus_{i=1}^rm_i K[Z_i]$ is a Stanley decomposition of $M$, then
$\mathcal H: H_M(t)=\sum_{i=1}^r \frac{t^{a_i}}{(1-t)^{b_i}}$ is a Hilbert decomposition of $M$, regarded now as a graded $S$-module,
where $a_i=\deg(m_i)$ and $b_i=|Z_i|$ for all $1\leq i\leq r$. This implies immediately that
$$\hdepth(M)\geq \sdepth(M).$$
One would expect that it is easy to compute the Hilbert depth of a module, once its Hilbert function is known. 
But it turns out that even for the powers of the maximal ideal, the computation of the Hilbert depth leads to difficult
numerical computations; see \cite{maxim}. Another argument for studying this invariant is the fact that the Hilbert depth
of a finitely generated $\mathbb Z^n$-graded $S$-module $M$ is an upper bound for the Stanley depth of $M$, as we have seen above.

We note that there exists also a multigraded version of the Hilbert depth invariant, but it this beyond the scope of our article; 
see \cite{bruns} and \cite{ichim0}. Also, we mention that A. Popescu \cite{apop} give an algorithm which computes the Hilbert depth of a
graded $S$-module. For a friendly introduction into the thematic of Stanley depth and Hilbert depth and further details, 
we refer the reader to \cite{her}.


Given two squarefree monomial ideals $0\subset I\subsetneq J\subset S$, for all $0\leq j\leq n$, we let
$\alpha_j(J/I):=$ the number of squarefree monomials $u\in S$ such that $u\in J\setminus I$ and $\deg(u)=j$. Also, let
$$\beta_k^q(J/I)=\sum_{j=0}^k (-1)^{k-j} \binom{q-j}{k-j}\alpha_j(J/I)\text{ for all }0\leq k\leq q\leq n.$$
In Teorem \ref{teo1} we prove that 
$$\hdepth(J/I)=\max\{q\;:\;\beta_k^q(J/I)\geq 0\text{ for all }0\leq k\leq q\}.$$
In particular, in Corollary~\ref{p2}, we reprove the fact that $\hdepth(J/I)\geq \sdepth(J/I)$ for any monomial ideals 
$0\subset I\subsetneq J\subset S$, not necessarily squarefree. 

We emphasize again that, when we talk about the Hilbert depth of $J/I$ and the depth of $J/I$, 
we consider the standard graded structure of $J/I$, while, when we talk about the Stanley depth of $J/I$, we consider the
multigraded structure of $J/I$!

In Theorem \ref{tu} we show that, if $I$ is a monomial ideal and $u\in S$ is a monomial regular of $S/I$, then
$$\qdepth(S/I)\geq \qdepth(S/(I,u))\geq \qdepth(S/I)-1,$$
and, moreover, these inequalities are sharp. However, if $I$ is a monomial complete intersection ideal,
minimally generated by $m$ monomials, then $\hdepth(S/I)=\sdepth(S/I)=\depth(S/I)=n-m$.
In Remark \ref{noidci} we note that, in this case, we may have 
$\qdepth(I)>\sdepth(I)=n-\left\lfloor \frac{m}{2} \right\rfloor$.
We end Section $2$ with an interesting combinatorial identity. More precisely, 
in Theorem~\ref{ccuccu}, we show that if $I\subset S$ is a squarefree monomial complete intersection, minimally generated by $m$ monomials,
then $$\beta^{n-m+1}_k(S/I) + \beta^{n-m+1}_{n-m+1-k}(S/I) = 0\text{ for }0\leq k\leq n-m+1.$$
In Section $3$, as an application of our new method for computing the Hilbert depth, we tackle the case of $J_{n,m}$, 
the squarefree Veronese ideal of degree $m$, i.e., the monomial ideal generated by all the squarefree monomials of degree $m$ in $S$. 
In Proposition~\ref{kukuk} we note that $\qdepth(S/J_{n,m})=m-1$. Ge et al.\ proved in \cite{ge} that 
$$\qdepth(J_{n,m})=m+\left\lfloor \frac{n-m}{m+1} \right\rfloor.$$ 
We give an alternative proof of this result, 
which makes use of a transformation formula for hypergeometric series; see Theorem~\ref{teo3}.

  
\section{Main results}

The main result of this section and of the paper in Theorem~\ref{teo1}, where we provide a new formula
for the Hilbert depth of $J/I$, where $0\subset I\subsetneq J\subset S$ are two squarefree monomial ideals.
Using this theorem, we derive several new results, like Theorem~\ref{tu}.
First, we fix some notations. 

We denote $[n]:=\{1,2,\ldots,n\}$. 
 For a subset $C\subset [n]$, we denote $x_C:=\prod_{j\in C}x_j$.
 For two subsets $C\subset D\subset [n]$, we denote $[C,D]:=\{A\subset [n]\;:\;C\subset A\subset D\}$,
      and we call it the \emph{interval} bounded by $C$ and $D$.
 Let $I\subset J\subset S$ be two squarefree monomial ideals. We let
$$\PP_{J/I}:=\{C\subset [n]\;:\;x_C\in J\setminus I\} \subset 2^{[n]}.$$
 A partition of $\PP_{J/I}$ is a decomposition
  $$\mathcal P:\;\PP_{J/I}=\bigcup_{i=1}^r [C_i,D_i],$$
      into disjoint intervals.
If $\mathcal P$ is a partition of $\PP_{J/I}$, we let $\sdepth(\mathcal P):=\min_{i=1,\dots,r} |D_i|$.
The Stanley depth of $P_{J/I}$ is 
      $$\sdepth(P_{J/I}):=\max\{\sdepth(\mathcal P)\;:\;\mathcal P\text{ is a partition of }\PP_{J/I}\}.$$
 Herzog, Vl\u adoiu and Zheng proved in \cite{hvz} that $$\sdepth(J/I)=\sdepth(\PP_{J/I}).$$
 Let $\PP:=\PP_{J/I}$, where $I\subset J\subset S$ are squarefree monomial ideals. For any $k$ with $0\leq k\leq n$, we denote
$$\PP_k:=\{A\in \PP\;:\;|A|=k\}\text{ and }\alpha_k(J/I)=\alpha_k(\PP)=|\PP_k|.$$
 For any $q$ and $k$ with $0\leq k\leq q\leq n$, we consider:
\begin{equation}\label{betak}
\beta_k^q(J/I)=\sum_{j=0}^k (-1)^{k-j} \binom{q-j}{k-j} \alpha_{j}(J/I).
\end{equation}
Using the inverse relation \cite[Equation (7) on p. 50 with $q=0$]{RiorAA}, from \eqref{betak} we get
\begin{equation}\label{alfak}
\alpha_k(J/I)=\sum_{j=0}^k \binom{q-j}{k-j} \beta_j^q(J/I),\text{ for }0\leq k\leq q.
\end{equation}
Let $\mathcal P:\;\PP_{J/I}=\bigcup_{i=1}^r [C_i,D_i]$ be a Stanley decomposition of $\PP_{J/I}$ with
$q:=\sdepth(\mathcal P)=\sdepth(J/I)$. By refining $\mathcal P$, we can assume that if $|C_i|<q$ then $|D_i|=q$;
see \cite[Lemma~3.4]{rin}. With this assumption it is easy to see that
\begin{equation}\label{redbird}
\beta_k^q(J/I)=|\{i\;:\;|C_i|=k\}| \geq 0\text{ for all }0\leq k\leq q.
\end{equation}


\begin{lema}\label{luma}
With the above notations, for every $0\leq r\leq n$, we have that:
\begin{align*}
(1-t)^r H_{J/I}(t) & = \beta_0^r(J/I) + \beta_1^r(J/I) t + \cdots + \beta_r^r(J/I) t^r + \\
                    & + \alpha_{r+1}(J/I) \frac{t^{r+1}}{1-t} + \cdots +\alpha_n(J/I) \frac{t^{n-r}}{(1-t)^{n-r}}.
\end{align*}
\end{lema}

\begin{proof}
First, let us consider the case $J=S$. 
Let $\Delta$ be the Stanley Reisner simplicial complex associated to $I$ and assume
that $\dim(\Delta)=d-1$. We denote $f=(f_{-1},f_0,\ldots,f_{d-1})$ the $f$-vector of $\Delta$.
It is easy to see that
$$\alpha_j(S/I)=\begin{cases} f_{j-1}(\Delta),&\text{for } 0\leq j\leq
d ,\\
0,&\text{for } d+1\leq j\leq n.\end{cases}$$
It follows that
$$H_{S/I}(t)=\sum_{j=0}^d f_{j-1}(\Delta) t^j (1-t)^{-j} = \sum_{j=0}^n \alpha_j(S/I) t^j (1-t)^{-j}.$$
Multiplying the above identity with $(1-t)^r$ and using \eqref{betak} we get the required conclusion.

The general case follows from the case $J=S$ and the obvious facts:
$$ H_{J/I}(t)=H_{S/I}(t)-H_{S/J}(t),\; \alpha_j(J/I)=\alpha_j(S/I)-\alpha_j(S/J),\text{ for all }0\leq j\leq n,$$
and, therefore, $\beta_k^r(J/I)=\beta_k^r(S/I)-\beta_k^r(S/J)$ for all $0\leq k\leq r\leq n$.
\end{proof}

We recall the definition of the Hilbert depth of a module; see \cite[Definition 3.1]{uli}:

\begin{dfn}
Let $M$ be a finitely generated graded $S$-module. The \emph{Hilbert depth} of $M$ is the number
$$\hdepth(M)=\max\left\{r\;:\;
\begin{matrix}
  \text{ There exists a f.g. graded }S\text{-module }N \\
  \text{ with }H_M(t)=H_N(t)\text{ and }\depth(N)=r \end{matrix}
\right\}.$$
\end{dfn}

Furthermore, we recall the following result.

\begin{teor}(\cite[Theorem 3.2]{uli})\label{uliu}
Let $M$ be a finitely generated graded $S$-module. Then
$$\hdepth(M)=\max\{r\;:\;(1-t)^q H_M(t)\text{ is non-negative }\}.$$
\end{teor}

Now, we can prove our first main result:

\begin{teor}\label{teo1}
Let $I\subset J\subset S$ be two squarefree monomial ideals. Then:
$$\qdepth(J/I):=\max\{q\;:\;\beta_k^q(J/I) \geq 0\text{ for all }0\leq k\leq q\}.$$
\end{teor}

\begin{proof}
This follows from Lemma \ref{luma} and Theorem~\ref{uliu}.
\end{proof}

A simple, but very useful lemma is the following:

\begin{lema}\label{13}
Let $0\subset I\subsetneq J\subset S$ be two squarefree monomial ideals. Then:
\begin{enumerate}
\item[(1)] $\qdepth(J/I)\leq \max\{k\;:\;\alpha_k(J/I)>0\}$.
\item[(2)] $\qdepth(J/I)\geq \min\{k\;:\;\alpha_k(J/I)>0\}$.
\end{enumerate}
\end{lema}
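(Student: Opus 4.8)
The plan is to work directly from the recursive definition \eqref{betak} of the numbers $\beta_k^d(J/I)$ together with the two binomial inversion formulas of Lemma \ref{magie}. Write $\PP=\PP_{J/I}$, which is nonempty because $I\subsetneq J$, and set $m:=\min\{k\;:\;\alpha_k(J/I)>0\}$ and $M:=\max\{k\;:\;\alpha_k(J/I)>0\}$, so that $0\le m\le M\le n$ and $\alpha_j(J/I)=0$ for $j<m$ and for $j>M$.

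For part (2) I would fix $d=m$ and check that $\beta_k^m(J/I)\ge 0$ for every $0\le k\le m$; this places $m$ in the set over which $\qdepth(J/I)$ is the maximum, hence $\qdepth(J/I)\ge m$. Concretely, Lemma \ref{magie}(2) gives $\beta_k^m(J/I)=\sum_{j=0}^{k}(-1)^{k-j}\binom{m-j}{k-j}\alpha_j(J/I)$, and since $\alpha_j(J/I)=0$ for all $j<m$, the only possibly nonzero summand requires $j=m$, forcing $k=m$. Thus $\beta_k^m(J/I)=0$ for $k<m$ and $\beta_m^m(J/I)=\alpha_m(J/I)>0$; in particular all these numbers are nonnegative. (One could equally run the obvious induction on $k$ in \eqref{betak}.)

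For part (1) I would argue by contradiction. Suppose the defining condition of $\qdepth$ holds for some $d>M$, i.e. $\beta_k^d(J/I)\ge 0$ for all $0\le k\le d$. Evaluating Lemma \ref{magie}(1) at $k=d$ yields $\alpha_d(J/I)=\sum_{j=0}^{d}\beta_j^d(J/I)$, and since $d>M$ we have $\alpha_d(J/I)=0$, so a sum of nonnegative integers vanishes and therefore $\beta_j^d(J/I)=0$ for every $0\le j\le d$. Feeding this back into Lemma \ref{magie}(1) gives $\alpha_k(J/I)=0$ for all $k\le d$, in particular $\alpha_M(J/I)=0$ since $M<d$, contradicting the choice of $M$. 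Hence the condition fails for every $d>M$, and consequently $\qdepth(J/I)\le M$.

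I do not anticipate a real obstacle: both parts are bookkeeping with the inversion identities of Lemma \ref{magie}. The one point requiring a little care is that in part (1) one must rule out \emph{all} $d>M$, not merely $d=M+1$, because a priori $\qdepth$ is the maximum of a set that need not be an interval; the contradiction above is uniform in $d$, which is exactly what makes the argument go through.
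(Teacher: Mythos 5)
Your proof is correct and follows essentially the same route as the paper: both parts reduce to the binomial inversion identities of Lemma \ref{magie} (the paper evaluates $\alpha_{M+1}=\sum_j\beta_j^{M+1}$ and concludes $I=J$; you evaluate $\alpha_d=\sum_j\beta_j^d$ and conclude $\alpha_M=0$). In fact your treatment of part (1) is slightly more careful than the paper's, which only tests $d=M+1$ and tacitly assumes that $\qdepth(J/I)\geq M+1$ forces $\beta_j^{M+1}(J/I)\geq 0$ for all $j$ --- true, but it requires knowing that the set of admissible $d$ in Definition \ref{def1} is downward closed (e.g.\ via the Pascal-rule identity $\beta_k^{d-1}=\beta_k^{d}+\beta_{k-1}^{d-1}$), whereas your contradiction, run uniformly over every $d>M$, sidesteps this point entirely.
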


\begin{proof}
(1) Let $m:=\max\{k\;:\;\alpha_k(J/I)>0\}$. If $m=n$ then there is nothing to prove. Suppose $m<n$.
From \eqref{alfak}, we have 
\begin{equation}\label{waaa}
0 = \alpha_{m+1}(J/I) = \sum_{j=0}^{m+1} \beta_j^{m+1}(J/I).
\end{equation}
If $\qdepth(J/I)\geq m+1$, from \eqref{waaa} it follows that 
$$\beta_j^{m+1}(J/I)=0\text{ for all }0\leq j\leq m+1.$$ Therefore $I=J$, a contradiction.

\medskip
(2) Here, the proof is similar.
\end{proof}

In order to extend the method of computing Hilbert depth given in Theorem~\ref{teo1} to quotients of 
arbitrary monomial ideals, we can use the well-known procedure of polarization; see for instance \cite[Section 1.5]{poli}.

Let $I\subset J\subset S$ be two monomial ideals. Let $x^g:=\lcm(u\;:\;u\in G(I)\cup G(J))$,
where $g=(g_1,\ldots,g_n)\in\mathbb N^n$ and $x^g=x_1^{g_1}\cdots x_n^{g_n}$.
We consider the polynomial ring $$R:=S[x_{ij}\;:\;1\leq i\leq n,\; 2\leq j\leq g_i].$$
Note that we added
$N=\sum\limits_{i=1}^n \max\{0,g_i-1\}$ new variables, i.e., $\dim(R)=\dim(S)+N$.

If $u=x_1^{a_1}\cdots x_n^{a_n}$ is a monomial such that $u\mid x^g$, that is, $a_i\leq g_i$ for $1\leq i\leq n$, we define the squarefree monomial 
$$u^p=x_1^{\min\{a_1,1\}}x_{12}\cdots x_{1,a_1}x_2^{\min\{a_2,1\}}x_{22}\cdots x_{2,a_2}
\cdots x_n^{\min\{a_n,1\}}x_{n2}\cdots x_{n,a_n}.$$
The polarizations of $I$ and $J$ are the squarefree monomial ideals 
$$I^p=(u^p\;:\;u\in G(I))\subset R\text{ and }J^p=(u^p\;:\;u\in G(J))\subset R.$$

\begin{prop}\label{p1}
With the above notations:
$$\qdepth(J/I):=\qdepth(J^p/I^p)-N.$$
\end{prop}

\begin{proof}
Since $J/I$ is obtained from $J^p/I^p$ by factorization with a regular sequence consisting of $N$ linear forms,
we have that $$H_{J/I}(t)=(1-t)^N H_{J^p/I^p}(t).$$
Now, the conclusion follows from Theorem~\ref{uliu}.
\end{proof}

As a direct consequence of \cite[Theorem~4.3]{ichim}, we have the following result:

\begin{prop}\label{p3}
With the above notations:
$$\sdepth(J/I)=\sdepth(J^p/I^p)-N.$$
\end{prop}

We mention the fact that the above result was generalized in \cite{ichim1} and \cite{ichim2},
where the authors show that  Stanley depth, as well as the usual depth, of $J/I$ are essentially determined 
by the so called lcm-lattices of $I$ and $J$. However, the Hilbert depth invariant is not; see \cite[Example 4.12]{ichim2}.

Now, we can reprove the following well known fact; see for instance \cite[Eq.(11), pag.38]{her}:

\begin{cor}\label{p2}
Let $I\subset J\subset S$ be two monomial ideals. Then 
$$\sdepth(J/I)\leq \qdepth(J/I).$$
\end{cor}

\begin{proof}
From Proposition \ref{p1} and Proposition \ref{p3} we can reduce to the squarefree case.
The conclusion follows from \eqref{redbird} and Theorem~\ref{teo1}.
\end{proof}

\begin{exm}\rm
Let $I=(x_1^2,x_1x_2^2)\subset S=K[x_1,x_2]$. Then $I^p=(x_1x_{12},x_1x_2x_{22})\subset R=S[x_{12},x_{22}]$.
For simplicity, we denote $x_{3}:=x_{12}$, $x_4:=x_{22}$, and thus $I^p=(x_1x_3,x_1x_2x_4)\subset R=K[x_1,x_2,x_3,x_4]$. We consider $P=P_{R/I^p}$. We denote $\alpha_j:=\alpha_j(R/I^p)$ and $\beta_k^q:=\beta_k^q(R/I^p)$ for all $j,k$ and $q$. It is easy to see that 
$$\alpha_0=1,\; \alpha_1=4,\; \alpha_2=5,\text{ and }\alpha_3=1.$$
For $q=2$, we have 
$$\beta_0^2=\alpha_0=1,\; \beta_1^2=\alpha_1-\binom{2}{1}\beta_0^2=2,\;\beta_2^2=\alpha_2-\binom{2}{2}\beta_0^2-\binom{1}{1}\beta_1^2 = 5-2-1=2.$$
For $q=3$, we have
$$\beta_0^3=\alpha_0=1,\; \beta_1^3=\alpha_1-\binom{3}{1}\beta_0^2=1,\;\beta_2^3=\alpha_2-\binom{3}{2}\beta_0^3-\binom{2}{1}\beta_1^3 = 5-3-2=0.$$
Moreover, we have $$\beta_3^3 = \alpha_3 - \binom{3}{3}\beta_0^3-\binom{2}{2}\beta_1^3 = 1-1-1=-1<0.$$
It follows that $\qdepth(R/I^p)=2$ and thus $\qdepth(S/I)=\qdepth(R/I^p)-2=0$.
\end{exm}

The following result can be seen as the counterpart of \cite[Lemma~3.6]{hvz} 
in the framework of Hilbert depth:

\begin{lema}\label{hvz}
Let $I\subsetneq J\subset S$ be two monomial ideals. Let $\overline I=I\overline S$ and $\overline J=J\overline S$ be the
extensions of $I$ and $J$ in the ring $\overline S:=S[x_{n+1}]=K[x_1,\ldots,x_{n+1}]$. Then
$$\qdepth(\overline J/\overline I)=\qdepth(J/I)+1.$$
\end{lema}

\begin{proof}
Since $x_{n+1}$ is regular on $(\overline J/\overline I)$ and
$(\overline J/\overline I)/x_{n+1}(\overline J/\overline I) \cong J/I$, we have that
$$H_{J/I}(t)=(1-t)H_{\overline J/\overline I}(t).$$
The conclusion follows from Theorem~\ref{uliu}.
\end{proof}

We also recall the following result:

\begin{prop}\label{pp21}
Let $0 \to U \to M \to N \to 0$ be a short exact sequence of finitely generated graded $S$-modules. Then:
$$\hdepth(M\oplus N)\geq \min\{\hdepth(M),\hdepth(N)\}.$$
\end{prop}

\begin{proof}
Since $H_M(t)=H_U(t)+H_N(t)$, the conclusion follows from Theorem~\ref{uliu}.
\end{proof}

As a particular case, we get:

\begin{cor}
Let $I\subsetneq J\subset S$ be two monomial ideals. Then:
\begin{enumerate}
\item[(1)] $\qdepth(S/I)\geq \min\{\qdepth(S/J),\qdepth(J/I)\}$.
\item[(2)] $\qdepth(J)\geq \min\{\qdepth(I),\qdepth(J/I)\}$.
\end{enumerate}
\end{cor}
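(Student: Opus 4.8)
The plan is to reduce both statements to the squarefree case via Theorem~\ref{hvz} and then exploit the additivity of the $\beta$-sequences with respect to the natural short exact sequence of posets. First I would recall that polarization commutes with the operations involved: the exact sequence $0\to J/I\to S/I\to S/J\to 0$ polarizes to $0\to J^p/I^p\to R/I^p\to R/J^p\to 0$, so after subtracting the same number $N$ of new variables it suffices to prove (1) and (2) when $I\subsetneq J\subset S$ are squarefree. In that case $\PP_{S/I}=\PP_{J/I}\sqcup\PP_{S/J}$ as a disjoint union inside $2^{[n]}$ (a monomial $x_C$ lies in $S\setminus I$ iff either $x_C\in J\setminus I$ or $x_C\in S\setminus J$, and these two possibilities are mutually exclusive). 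This is exactly the hypothesis of Proposition~\ref{pp21}, so $\qdepth(S/I)=\qdepth(\PP_{S/I})\geq\min\{\qdepth(\PP_{J/I}),\qdepth(\PP_{S/J})\}=\min\{\qdepth(J/I),\qdepth(S/J)\}$, which is (1).

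For (2), the plan is to run the same argument one dimension up. The ideal $J$ fits into $0\to I\to J\to J/I\to 0$, and on the poset side one has $\PP_J=\PP_{J/I}\sqcup\PP_I$ (a monomial lies in $J$ iff it lies in $J\setminus I$ or in $I$). Again the two pieces are disjoint subsets of $2^{[n]}$, so Proposition~\ref{pp21} gives $\qdepth(J)\geq\min\{\qdepth(I),\qdepth(J/I)\}$ directly in the squarefree case, and the polarization reduction transfers it to arbitrary monomial ideals, with the caveat that one must check $(I^p\subset J^p)$ still gives $\PP_{J^p}=\PP_{J^p/I^p}\sqcup\PP_{I^p}$ — which is immediate. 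One subtlety worth a sentence: in Definition~\ref{def1} and in Proposition~\ref{pp21} the invariant $\qdepth$ is defined for any poset $P\subset 2^{[n]}$ purely through the numbers $\alpha_k(P)$, so the decomposition only needs to hold at the level of the cardinality-counting functions $\alpha_k$, and additivity $\alpha_k(\PP_{S/I})=\alpha_k(\PP_{J/I})+\alpha_k(\PP_{S/J})$ is exactly what the disjoint-union observation supplies.

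The main obstacle is essentially bookkeeping rather than a genuine difficulty: one has to make sure the polarization reduction is set up with a single common ring $R$ and a single $N$ for all three of $I,J$ (and, for (2), of $I,J$ again), so that subtracting $N$ is legitimate simultaneously in $\qdepth(J^p/I^p)$, $\qdepth(S^p/I^p)$, $\qdepth(S^p/J^p)$, etc. Since $\qdepth$ of a monomial quotient was defined in Definition~\ref{def2} using $x^g=\lcm$ over the union of generators, taking $g$ for the pair $(I,J)$ already dominates the $g$'s needed for $I$ alone and for $J$ alone; alternatively one can simply invoke Theorem~\ref{hvz} to normalize all polarizations to the same ambient ring without changing any $\qdepth$ value. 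With that in place, the inequalities follow in one line from Proposition~\ref{pp21}. I would therefore write the proof as: reduce to squarefree via polarization (citing Definition~\ref{def2} and, if needed, Theorem~\ref{hvz}); observe the disjoint-union decomposition of the relevant poset; apply Proposition~\ref{pp21}.
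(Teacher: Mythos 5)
Your proposal is correct and follows essentially the same route as the paper: reduce to the squarefree case by polarization (handling the common-ring/common-$N$ issue via Theorem~\ref{hvz}, exactly as the paper does), then decompose $\PP_{S/I}=\PP_{J/I}\sqcup\PP_{S/J}$ (resp.\ $\PP_J=\PP_I\sqcup\PP_{J/I}$) coming from the short exact sequence and apply Proposition~\ref{pp21}. No gaps to report.
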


\begin{lema}\label{pp22}
Let $I\subset J\subset S':=K[x_1,\ldots,x_m]$ be two monomial ideals and $u\in S[x_{m+1},\ldots,x_{n}]$ a monomial. Then
$$\qdepth(JS/IS)=\qdepth(u(JS/IS)).$$
\end{lema}

\begin{proof}
Since $H_{u(JS/IS)}(t)=t^{\deg(u)}H_{JS/IS}(t)$, the conclusion follows from Theorem~\ref{uliu}.
\end{proof}

As a direct consequence of Lemma~\ref{pp22} we get:

\begin{cor}\label{ppp}
Let $I\subset S$ be a monomial ideal and $u\in S$ a monomial such that $u\notin I$ and $I=u(I:u)$. Then
$\qdepth(I:u) = \qdepth(I)$.
\end{cor}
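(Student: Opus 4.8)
The statement to prove is Corollary \ref{ppp}: if $I\subset S$ is a monomial ideal, $u\in S$ a monomial with $u\notin I$ and $I = u(I:u)$, then $\qdepth(I:u)=\qdepth(I)$. The plan is to recognize this as a direct application of Proposition \ref{pp22}, which has already done all the real work. The key observation is that the hypothesis $I = u(I:u)$ says precisely that $I$ is the image of the ideal $I:u$ under multiplication by the monomial $u$. So if I set $J := I:u$ and note that $I \subset J$ (since $u\notin I$ forces $1$-like behavior... more carefully: for any $w\in I$ we have $w = u\cdot(w/u)$ with $w/u\in I:u$, so $I \subset u(I:u) \subset I:u$ provided $u$ is a genuine monomial, i.e. not a unit — and if $u$ were a unit the statement is trivial), then $I = u\cdot J$ with $J = I:u$.

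Concretely, I would first reduce to the situation where $J := I:u$ lives in a polynomial subring not involving the variables dividing $u$. This requires a small massaging step: write $u = x^a$ and let $W = \supp(u)$; one checks from $I = u(I:u)$ that no generator of $I:u$ is divisible by any variable in $W$ (if some $x_i \in W$ divided a generator $v$ of $I:u$, then $uv/x_i$ would still be in... hmm, one must be a little careful here, but the cleanest route is: $I:u$ need not avoid those variables, so instead I apply Proposition \ref{pp22} after possibly enlarging the ambient ring). Actually the slick approach: Proposition \ref{pp22} is stated for $I\subset J\subset S' = K[x_1,\dots,x_m]$ and $u\in S[x_{m+1},\dots,x_n]$ with conclusion $\qdepth(JS/IS) = \qdepth(u(JS/IS))$. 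Taking $J = I:u$, $I = (0)$... no. The right instantiation is the degenerate case $I\subset J$ with the ``$I$'' of the proposition being $0$: then Proposition \ref{pp22} gives $\qdepth(J) = \qdepth(uJ)$ whenever $J$ and $u$ involve disjoint variables and $J = u(I:u)$ with $I = uJ$. So the plan is: set $L := I:u$; by the hypothesis $I = uL$; now I want $\qdepth(L) = \qdepth(uL) = \qdepth(I)$, which is exactly Proposition \ref{pp22} applied with the roles $J\leftsquigarrow L$, $I \leftsquigarrow 0$, once I have arranged that $L$ and $u$ use disjoint sets of variables.

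The one genuine step is therefore justifying that $L = I:u$ can be taken to involve only variables not dividing $u$, i.e. that $G(I:u)$ consists of monomials supported away from $\supp(u)$. I claim this follows from $I = u(I:u)$ together with minimality of generators: suppose $x_i \mid u$ and $x_i \mid v$ for some $v\in G(L)$; then $uv \in G(I)$ (here one uses $I = uL$ and that multiplication by $u$ carries $G(L)$ onto $G(I)$ bijectively), but $uv/x_i$ also lies in $I = uL$ since $v/x_i \cdot x_i^{?}$... this needs $x_i^2 \mid uv$, which holds because $x_i\mid u$ and $x_i\mid v$; then $uv/x_i \in uL$ means $v/x_i \in L = I:u$, contradicting $v\in G(L)$. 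So indeed $\supp(v)\cap\supp(u)=\emptyset$ for every $v\in G(L)$. (If $u$ is a unit, i.e. $u=1$, the corollary is vacuous/trivial.) With this established, $L$ is (extended from) an ideal in the polynomial ring on the variables outside $\supp(u)$, $u$ lives in the complementary variables, and Proposition \ref{pp22} gives $\qdepth(I) = \qdepth(uL) = \qdepth(L) = \qdepth(I:u)$.

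The main obstacle, such as it is, is this disjoint-support bookkeeping: making precise that multiplication by the monomial $u$ induces a bijection $G(I:u)\to G(I)$ under the hypothesis $I=u(I:u)$, and deducing from it that generators of $I:u$ avoid the variables in $\supp(u)$. Everything after that is a one-line citation of Proposition \ref{pp22} (with $I=0$), which itself rests on Theorem \ref{hvz}.
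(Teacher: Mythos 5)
Your overall strategy --- write $I=u\cdot L$ with $L=I:u$ and invoke Proposition \ref{pp22} (with the lower ideal equal to $0$) to conclude $\qdepth(uL)=\qdepth(L)$ --- is exactly the derivation the paper intends, since the corollary is stated without proof immediately after Proposition \ref{pp22}. However, the step you yourself single out as ``the one genuine step'' is wrong. You claim that the hypothesis $I=u(I:u)$ forces every minimal generator of $L=I:u$ to be supported away from $\supp(u)$, which is the disjointness that Proposition \ref{pp22} requires. This is false: take $S=K[x]$, $I=(x^2)$, $u=x$. Then $u\notin I$, $I:u=(x)$ and $u(I:u)=(x^2)=I$, so all hypotheses hold, yet the generator $x$ of $I:u$ is divisible by the variable of $u$. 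Your argument breaks exactly where you hesitate: from $x_i\mid u$ and $x_i\mid v$ you infer $uv/x_i\in I=uL$ and hence $v/x_i\in L$, but there is no reason for $uv/x_i$ to lie in $I$ (in the example $uv/x_i=x\notin(x^2)$); that deduction is circular. So as written the proof does not go through.

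The disjointness you need is obtained by polarizing first, which is in any case how $\qdepth$ is defined for non-squarefree ideals (Definition \ref{def2}). Writing $u=\prod_i x_i^{b_i}$ and $v=\prod_i x_i^{a_i}$, the paper's polarization satisfies $(uv)^p=u^p\cdot\sigma(v^p)$, where $\sigma$ is the relabelling $x_{i,k}\mapsto x_{i,k+b_i}$ into fresh variables. Since $I=u(I:u)$ means every minimal generator of $I$ is $u$ times a generator of $I:u$, this gives $I^p=u^p\cdot\sigma\bigl((I:u)^p\bigr)$, and now $u^p$ and $\sigma\bigl((I:u)^p\bigr)$ genuinely involve disjoint sets of variables. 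Applying Proposition \ref{pp22} at this level and then Theorem \ref{hvz} to account for the different numbers of ambient variables yields $\qdepth(I^p)-N=\qdepth\bigl((I:u)^p\bigr)-N'$, which is precisely $\qdepth(I)=\qdepth(I:u)$. In short: the reduction to Proposition \ref{pp22} is the right idea, but it must be performed after polarization, not via the (false) unpolarized disjoint-support claim.
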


\begin{lema}
Let $I\subset S$ be a monomial ideal and $u\in S$ a monomial with $u\notin I$. Then
$$\qdepth(S/I)\geq \min\{ \qdepth(S/(I:u)),\qdepth(S/(I,u)) \}.$$
\end{lema}

\begin{proof}
Using the short exact sequence $$0\to S/(I:u) \stackrel{\cdot u}{\longrightarrow} S/I \to S/(I,u) \to 0,$$
we deduce that $$H_{S/I}(t)=t^{\deg u}\cdot H_{S/(I:u)}(t)+H_{S/(I,u)}(t).$$
The conclusion follows from Theorem~\ref{uliu}.
\end{proof}

We recall the following well-known results.

\begin{prop}\label{ccc}
  Given a monomial ideal $I\subset S$ and $u\in S$ a monomial which is not contained in $I$,
  we have the following:
\begin{enumerate}
\item[(1)] $\sdepth(S/(I:u))=\sdepth(S/I)$ if $I=u(I:u)$; see \cite[Theorem~1.1(1)]{mir};
\item[(2)] $\sdepth(S/(I:u))\geq \sdepth(S/I)$; see \cite[Proposition~1.3]{pop} {\em(}{\tt arXiv} version{\em)}; 
\item[(3)] $\sdepth(I:u)\geq \sdepth(I)$; see \cite[Proposition~2.7(2)]{mirci};
\item[(4)] $\depth(S/(I:u))\geq \depth(S/I)$; see \cite[Corollary~1.3]{asia}.
\end{enumerate}
\end{prop}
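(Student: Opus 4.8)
Proposition \ref{ccc} collects four facts already in the literature, so the plan is primarily to invoke the four cited references; I sketch the ideas below, since they are short. In each case the relevant tool is the short exact sequence
\begin{equation*}
0\longrightarrow S/(I:u)\stackrel{\cdot u}{\longrightarrow} S/I\longrightarrow S/(I,u)\longrightarrow 0,
\end{equation*}
and it is worth recording that the hypothesis $I=u(I:u)$ in (1) is equivalent to asking that $u$ divide every minimal monomial generator of $I$: the inclusion $u(I:u)\subseteq I$ is automatic, and the reverse one forces $G(I)\subseteq uS$.

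I would prove (2) and (3) first, as they carry only an inequality and admit a uniform argument. Fix a Stanley decomposition $\mathcal D:\;S/I=\bigoplus_{i=1}^r m_iK[Z_i]$ with $\sdepth(\mathcal D)\ge d$. The monomials of $S/(I:u)$ are precisely the monomials $v$ with $uv\notin I$, so $v\mapsto uv$ embeds them into the monomials of $S/I$; the preimage of a Stanley space $m_iK[Z_i]$ under this map is either empty or again a single Stanley space $x^{e_i}K[Z_i]$ with the \emph{same} set of variables $Z_i$, obtained by solving $uv\in m_iK[Z_i]$ coordinatewise — the coordinates outside $Z_i$ are forced, those inside $Z_i$ stay free above a threshold. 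As these preimages are pairwise disjoint and exhaust $S/(I:u)$, they form a Stanley decomposition of $S/(I:u)$ of Stanley depth $\ge\min_i|Z_i|\ge d$, whence $\sdepth(S/(I:u))\ge\sdepth(S/I)$; running the same computation with $I$ in place of $S/I$ gives $\sdepth(I:u)\ge\sdepth(I)$. These are \cite[Proposition 1.3]{pop} and \cite[Proposition 2.7(2)]{mirci}.

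For (1) the inequality ``$\le$'' just obtained must be matched by ``$\ge$'', and this is where the condition that $u$ divide each element of $G(I)$ enters: multiplying a Stanley decomposition of $S/(I:u)$ by $u$ accounts for all monomials of $S/I$ divisible by $u$, and the remaining monomials — those not divisible by $u$, all of which lie outside $I$ — must be covered by Stanley spaces of Stanley depth $\ge\sdepth(S/(I:u))$ without dropping the Stanley depth, which is carried out in \cite[Theorem 1.1(1)]{mir}. Finally, (4) is obtained by feeding the displayed sequence, together with $(I,u)/I\cong\bigl(S/(I:u)\bigr)(-\deg u)$, into the long exact sequence of local cohomology modules supported at $\me$, after reducing to the case $u=x_i$ via $I:(u_1u_2)=(I:u_1):u_2$; this is \cite[Corollary 1.3]{asia}. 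If one wished to avoid citations altogether, the genuine obstacles would be exactly the reverse inequality in (1) and the depth statement (4): controlling how an \emph{optimal} Stanley decomposition, respectively the local cohomology, transforms under a colon is the technical core of \cite{mir} and \cite{asia}, and it is cleanest to quote them.
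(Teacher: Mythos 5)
The paper offers no proof of Proposition \ref{ccc} at all: it is introduced with ``We recall the following well know results'' and justified purely by the four citations, which are exactly the references you invoke, so your proposal takes the same approach as the paper. Your supplementary sketches for (2) and (3) --- pulling back each Stanley space $m_iK[Z_i]$ along multiplication by $u$ to get an empty set or a single Stanley space on the same $Z_i$ --- are the standard correct arguments from the cited sources; only the local-cohomology sketch for (4) is too optimistic as written (the long exact sequence attached to the displayed short exact sequence only yields $\depth(S/(I:u))\geq\min\{\depth(S/I),\depth(S/(I,u))+1\}$, not the claimed inequality), but since you explicitly defer to \cite{asia} for that point, nothing is lost.
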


It is natural to ask if similar formulae hold for Hilbert depth. This is not the case, as the following example shows.

\begin{exm}\rm
(1) Let $I=(x_1x_2,x_2x_3,x_3x_4,x_4x_5)\subset S=K[x_1,\ldots,x_6]$. By straightforward computations, using Theorem~\ref{teo1}, we get 
$$\qdepth(S/I)=3\text{ and }\qdepth(S/x_6I)=4.$$ 
This shows that results similar to (1), (2) and (4) of Proposition~\ref{ccc} do not hold in the framework of Hilbert depth.

Let $I'=I\cap S'$, where $S'=K[x_1,\ldots,x_5]$. From Lemma~\ref{hvz}, Corollary~\ref{ppp} and the straightforward computation
of $\qdepth(I')$, we have 
$$\qdepth(I)=\qdepth(x_6I)=\qdepth(I')+1=5.$$

\medskip
(2) Let $I=(x_1x_2,x_2x_3,x_3x_4,x_4x_5,x_5x_1)$ and $J=(x_1x_2,x_2x_3,x_3x_4,x_4x_5,x_5x_1x_6)$ be ideals in
   $S=K[x_1,\ldots,x_6]$. It is clear that $(J:x_6)=I$. By straightforward computations we get
	$$\qdepth(I)=5>\qdepth(J)=4,$$
	which shows that the results (3) and (4) of Proposition~\ref{ccc} do not hold in the framework of Hilbert depth.
\end{exm}

We recall the following result.

\begin{teor}\label{cranz}
Let $I\subset S$ be a monomial ideal minimally generated by $m$ monomials. Then:
\begin{enumerate}
\item[(1)] $\sdepth(S/I)\geq n-m$; see \cite[Proposition~1.2]{mir};
\item[(2)] $\sdepth(I)\geq \max\{1,n-\lfloor \frac{m}{2} \rfloor\}$; see \cite[Theorem~2.3]{okazaki}.
\end{enumerate}
\end{teor}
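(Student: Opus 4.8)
The plan is to treat the two inequalities separately. Part (1) I would prove by a self-contained induction on $n$, decomposing $S/I$ by the exponent of $x_n$; part (2) is the substantially harder, combinatorial statement, which I would reduce to the squarefree case and then derive from Okazaki's argument.

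For (1), induct on $n$, proving the (formally stronger, but equivalent) statement that $\sdepth(S/I)\ge n-m$ whenever $I$ is generated by $m$ monomials, not necessarily minimally. Write each minimal generator as $u_i=x_n^{c_i}u_i^{\ast}$ with $x_n\nmid u_i^{\ast}$, set $a:=\max_i c_i$, and for $0\le j\le a$ put $J_j:=(u_i^{\ast}: c_i\le j)\subseteq S':=K[x_1,\ldots,x_{n-1}]$. Sorting the standard monomials of $S/I$ by their $x_n$-exponent, truncated at $a$, gives a decomposition of $\mathbb Z^n$-graded $K$-vector spaces
\[
S/I \;=\; \bigoplus_{j=0}^{a-1} x_n^{\,j}\bigl(S'/J_j\bigr)\ \oplus\ x_n^{\,a}\bigl(S'/J_a\bigr)[x_n].
\]
When $x_n$ divides no generator ($a=0$) this is just $S/I=(S'/I')[x_n]$ and one concludes by induction (together with the trivial inequality $\sdepth(M[x_n])\ge\sdepth(M)+1$, cf.\ \cite{hvz}). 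Otherwise the point is a bookkeeping on generator counts: since some $u_{i_0}$ has $c_{i_0}=a>j$ for every $j\le a-1$, each $J_j$ with $j\le a-1$ is generated by at most $m-1$ monomials, so by induction $\sdepth(S'/J_j)\ge(n-1)-(m-1)=n-m$; while $J_a$ may need all $m$ generators, the adjoined variable $x_n$ restores the count, $\sdepth\bigl((S'/J_a)[x_n]\bigr)\ge\sdepth(S'/J_a)+1\ge n-m$. Concatenating Stanley decompositions of the summands (multiplying the $m_i$ by $x_n^{\,j}$, and adjoining $x_n$ to every $Z_i$ of the last summand) produces a Stanley decomposition of $S/I$ with all $Z_i$ of size $\ge n-m$.

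For (2), the bound $\sdepth(I)\ge 1$ for a nonzero monomial ideal is well known and elementary, so the real content is $\sdepth(I)\ge n-\lfloor m/2\rfloor$. First I would pass to the squarefree case: polarization adjoins $N$ new variables, does not change the number of minimal generators, and satisfies $\sdepth(I)=\sdepth(I^p)-N$ (cf.\ \cite{ichim}), so the squarefree estimate in $n+N$ variables yields the general one. In the squarefree setting one is asked to partition the poset $\PP_I=\bigcup_{i=1}^{m}[B_i,[n]]$ (with $B_1,\ldots,B_m$ the sets of the minimal generators) into intervals $[C,D]$ all of whose tops satisfy $|D|\ge n-\lfloor m/2\rfloor$. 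The governing idea is to group the generators into $\lfloor m/2\rfloor$ pairs (plus one leftover $B_i$ if $m$ is odd, which contributes the full interval $[B_i,[n]]$ at no cost), because a two-generated ideal $(x_B,x_{B'})$ admits a partition losing only one element from the top — this is the $m=2$ case.

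The main obstacle, and the reason I would in the end cite \cite[Theorem 2.3]{okazaki} rather than reconstruct the argument, is making this pair-by-pair decomposition fit together globally. Removing generators one at a time and iterating colon (equivalently, set-difference) operations — the route behind the easy bound $\sdepth(I)\ge n-m+1$ — creates remainder ideals with roughly $m$ generators, hence pieces with only about $n-m$ free variables; the decomposition must instead be arranged so that the contribution of each pair is excised while keeping all the intervening colon ideals of size $O(m/2)$, and a parity device handles odd $m$. This is precisely the combinatorial core of Okazaki's proof; in the special case $I=\me=(x_1,\ldots,x_n)$ it specializes to the nontrivial equality $\sdepth(\me)=\lceil n/2\rceil$, and I would model the general argument on that, inducting on $m$ in steps of two with the pairing chosen compatibly with a carefully selected variable.
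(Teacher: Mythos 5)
The paper does not actually prove Theorem \ref{cranz}: it is a recalled result, with (1) attributed to \cite[Proposition 1.2]{mir} and (2) to \cite[Theorem 2.3]{okazaki}, so there is no internal argument to compare against. Your proposal is nonetheless sound, and it supplies more than the paper does for part (1): the splitting $S/I=\bigoplus_{j=0}^{a-1}x_n^{j}(S'/J_j)\oplus x_n^{a}(S'/J_a)[x_n]$ is a correct $\mathbb Z^n$-graded decomposition (a monomial $x_n^{j}v$ with $v\in S'$ lies in $I$ exactly when $v\in J_j$, and $J_j=J_a$ for $j\geq a$), the count that each $J_j$ with $j<a$ needs at most $m-1$ generators because the generator realizing $c_{i_0}=a$ is excluded is right, and strengthening the induction hypothesis to not-necessarily-minimal generating sets is exactly what makes the inductive step close; this yields a complete, self-contained proof of (1), in the same elementary spirit as the cited reference even if not literally identical to it. For part (2) you correctly isolate what is routine (the bound $\sdepth(I)\geq 1$, and the reduction to the squarefree case via polarization, which preserves the number of minimal generators and shifts $\sdepth$ by $N$ as in Proposition \ref{p3}) from what is the genuine content, namely Okazaki's pairing argument, and you cite \cite[Theorem 2.3]{okazaki} for that core rather than reproving it --- which is precisely what the paper itself does. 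The only cosmetic remark is that the polarization detour in (2) is unnecessary, since Okazaki's theorem is stated for arbitrary monomial ideals.
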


\begin{cor}\label{cranza}
Let $I\subset S$ be a monomial ideal minimally generated by $m$ monomials. Then:
\begin{enumerate}
\item[(1)] $\qdepth(S/I)\geq n-m$.
\item[(2)] $\qdepth(I)\geq \max\{1,n-\lfloor \frac{m}{2} \rfloor\}$.
\end{enumerate}
\end{cor}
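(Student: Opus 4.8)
The plan is to deduce Corollary \ref{cranza} from Theorem \ref{cranz} together with Proposition \ref{p2}. Since we already know from Proposition \ref{p2} that $\sdepth(J/I)\leq\qdepth(J/I)$ for arbitrary monomial ideals, the inequalities $\sdepth(S/I)\geq n-m$ and $\sdepth(I)\geq\max\{1,n-\lfloor m/2\rfloor\}$ of Theorem \ref{cranz} immediately give $\qdepth(S/I)\geq\sdepth(S/I)\geq n-m$ and $\qdepth(I)\geq\sdepth(I)\geq\max\{1,n-\lfloor m/2\rfloor\}$. So in fact both statements follow in one line, and this is surely the intended (short) argument.

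The only subtlety to check is the degenerate case where the bound is vacuous or negative: if $m\geq n$ then $n-m\leq 0$, and one should observe that $\qdepth(S/I)\geq 0$ always holds (for any poset $P\neq\emptyset$ we have $\beta_0^0(P)=\alpha_0(P)\geq 0$, so $d=0$ is admissible in Definition \ref{def1}), so part (1) is trivially true in that range as well. Similarly, in part (2) the $\max\{1,\cdot\}$ already handles the case $n-\lfloor m/2\rfloor\leq 0$, and $\qdepth(I)\geq 1$ whenever $I\neq 0$ is guaranteed by Lemma \ref{13}(2), since the minimal cardinality of a set in $\PP_I$ (or in $\PP_{I^p}$, after polarization and Theorem \ref{hvz} to restore the added variables) is at least $1$.

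I would write the proof as: it follows directly from Theorem \ref{cranz} and Proposition \ref{p2}, adding the one remark above about the $\qdepth\geq 0$ and $\qdepth\geq 1$ boundary cases so that the statement as written (with the $\max$ in part (2) and the possibly-nonpositive $n-m$ in part (1)) is literally correct. There is no real obstacle here; the content of the corollary is entirely carried by the already-proved comparison $\sdepth\leq\qdepth$. If the authors instead wanted a self-contained combinatorial proof not routing through Stanley depth, one could mimic the partition-counting argument of Proposition \ref{p1}: build an explicit interval partition of $\PP_{S/I}$ (respectively $\PP_I$, after polarization) realizing the bound and read off that the relevant $\beta_k^d$ are nonnegative; but given that Theorem \ref{cranz} is quoted precisely in this form, the one-line deduction is clearly the route to take.
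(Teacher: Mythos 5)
Your proposal is correct and is exactly the paper's own argument: the authors prove Corollary \ref{cranza} in one line by combining Theorem \ref{cranz} with Proposition \ref{p2} ($\sdepth\leq\qdepth$). Your extra remarks on the degenerate cases are harmless but not needed beyond what the cited results already give.
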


\begin{proof}
This follows from Theorem \ref{cranz} and Proposition~\ref{p2}.
\end{proof}

\begin{lema}\label{kkk}
Let $1\leq m < n$, and let $I'\subset S':=K[x_1,\ldots,x_m]$ be a squarefree monomial ideal. Let $u=x_{m+1}\cdots x_n$
and $s=\deg(u)=m-n$. Let $q\geq 0$ be an integer. Then
$$\beta_k^{q+s}(S/(I,u))=\begin{cases} \beta_k^q(S'/I')- \beta_{k-s}^q(S'/I'),& \text{if }0\leq k\leq q, \\
   \sum\limits_{\ell=0}^{k-q-1} \binom{k-q-1}{\ell} \alpha_{q+1+\ell}(S'/I') - \beta_{k-s}^d(S'/I'),&\text{if }q+1\leq k\leq q+s, \end{cases}  $$
where $\beta_j^q(S'/I')=0$ for $j<0$.
\end{lema}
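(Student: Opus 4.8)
The plan is to compute the cardinalities $\alpha_k\big(S/(I,u)\big)$ in terms of those of $S'/I'$ (here and below $I$ denotes the extension $I'S$, and $s=\deg u$) and then to apply the inversion formula of Lemma \ref{magie}. For Step~1, observe that a squarefree monomial $x_C$, $C\subseteq[n]$, lies in $S\setminus(I,u)$ if and only if $x_C\notin I$ and $u\nmid x_C$; writing $C=A\cup B$ with $A=C\cap[m]$ and $B=C\cap\{m+1,\dots,n\}$, this means $A\in\PP_{S'/I'}$ and $B\subsetneq\{m+1,\dots,n\}$. Grouping by cardinality, and using that among the $\binom{s}{t}$ subsets of $\{m+1,\dots,n\}$ of size $t$ exactly one is forbidden — namely the whole set, occurring only when $t=s$ — we obtain
$$\alpha_k\big(S/(I,u)\big)=\sum_{j=0}^{k}\binom{s}{k-j}\alpha_j(S'/I')-\alpha_{k-s}(S'/I')\qquad(0\le k\le n),$$
with the usual convention $\alpha_j(S'/I')=0$ for $j<0$ (and for $j>m$).

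For Step~2, apply Lemma \ref{magie}(2): $\beta_k^{d+s}\big(S/(I,u)\big)=\sum_{j=0}^{k}(-1)^{k-j}\binom{d+s-j}{k-j}\alpha_j\big(S/(I,u)\big)$. Substituting Step~1 and splitting according to the two terms, write this as $T_1-T_2$ where $T_2=\sum_{j=0}^{k}(-1)^{k-j}\binom{d+s-j}{k-j}\alpha_{j-s}(S'/I')$. The substitution $j\mapsto j+s$ turns $T_2$ into $\sum_{j=0}^{k-s}(-1)^{(k-s)-j}\binom{d-j}{(k-s)-j}\alpha_j(S'/I')$, which by Lemma \ref{magie}(2) applied to $S'/I'$ — legitimate since $k-s\le d$ in the stated range, with the empty-sum convention when $k-s<0$ — equals $\beta_{k-s}^d(S'/I')$, exactly the subtracted term in the claim.

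The heart of the argument is Step~3, the evaluation of $T_1$. Exchanging the order of summation, $T_1=\sum_{i=0}^{k}\alpha_i(S'/I')\,c_i$ with $c_i=\sum_{t=0}^{k-i}(-1)^{k-i-t}\binom{d+s-i-t}{k-i-t}\binom{s}{t}$. I would evaluate $c_i$ using the generating-function identity $\sum_{t}\binom{s}{t}x^{t}(1-x)^{a-t}=(1-x)^{a-s}$ (which follows by setting $y=x$, $z=1-x$ in $\sum_{t}\binom{s}{t}y^{t}z^{a-t}=z^{a-s}(y+z)^{s}$): reading off the coefficient of $x^{k-i}$ with $a=d+s-i$ gives $c_i=(-1)^{k-i}\binom{d-i}{k-i}$. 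For $i\le d$ this is an ordinary binomial coefficient, which vanishes as soon as $k>d$; for $i>d$ (which forces $k>d$) the negative-upper-index rule $\binom{-n}{r}=(-1)^{r}\binom{n+r-1}{r}$ converts it into $c_i=\binom{k-d-1}{i-d-1}$. Plugging back: if $0\le k\le d$, then $T_1=\sum_{i=0}^{k}(-1)^{k-i}\binom{d-i}{k-i}\alpha_i(S'/I')=\beta_k^d(S'/I')$ by Lemma \ref{magie}(2); if $d+1\le k\le d+s$, only the indices $d+1\le i\le k$ contribute and $T_1=\sum_{\ell=0}^{k-d-1}\binom{k-d-1}{\ell}\alpha_{d+1+\ell}(S'/I')$. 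Combining $T_1$ with $T_2$ yields the asserted formula.

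I expect Step~3 to be the main obstacle: obtaining the closed form $c_i=(-1)^{k-i}\binom{d-i}{k-i}$, and above all keeping careful track of the dichotomy $i\le d$ versus $i>d$ (equivalently $k\le d$ versus $d+1\le k\le d+s$), so that an ordinary binomial coefficient which collapses to $0$ in one branch is replaced by $\binom{k-d-1}{i-d-1}$ in the other. The bookkeeping with the conventions for binomials having negative entries, and the check that the generating-function manipulation is valid throughout the range $k\le d+s$, is where the care is needed.
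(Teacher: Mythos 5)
Your proof is correct, and it follows the same overall skeleton as the paper's: compute $\alpha_k(S/(I,u))=\alpha_k(S/I'S)-\alpha_{k-s}(S'/I')$, apply the inversion formula of Lemma \ref{magie}(2), and recognize the subtracted piece as $\beta^d_{k-s}(S'/I')$ after the shift $j\mapsto j+s$ (the paper's \eqref{orbete2}, \eqref{sunca1} and \eqref{sunca3}). The one place where you genuinely diverge is the evaluation of $T_1=\beta_k^{d+s}(S/I'S)$: the paper obtains its closed form \eqref{sunca2} by iterating the one-variable recursion of Theorem \ref{hvz} (namely $\beta_k^{D+1}$ of the extended ring equals $\beta_k^{D}$ of the smaller one for $k\le D$, while $\beta_{D+1}^{D+1}$ equals $\alpha_{D+1}$ of the smaller one) a total of $s$ times, whereas you prove it in one stroke from the Vandermonde-type identity $\sum_t(-1)^{K-t}\binom{s}{t}\binom{a-t}{K-t}=(-1)^{K}\binom{a-s}{K}$ together with the negative-upper-index rule. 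Your computation checks out: in the range $t\le K=k-i$ one has $a-t=d+s-i-t\ge d+s-k\ge 0$, so every binomial actually occurring in $c_i$ is ordinary, and the dichotomy $i\le d$ (where $\binom{d-i}{k-i}$ vanishes as soon as $k>d$) versus $i>d$ (where it converts to $\binom{k-d-1}{i-d-1}$) reproduces exactly the two branches of the statement. What your route buys is a self-contained, non-recursive proof of \eqref{sunca2}; what the paper's route buys is brevity, since \eqref{croco} and \eqref{dp1} are already in place from Theorem \ref{hvz}. (A minor point: the statement's $s=m-n$ and the undeclared ideal $I$ are slips in the paper for $s=n-m$ and $I=I'S$, which you interpreted correctly.)
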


\begin{proof}
Let $k\leq s$. Since $S/(I',u) \cong S/I'S \oplus (I,u)/I$ it follows that 
\begin{equation}\label{orbete}
\alpha_k(S/(I',u)) = \alpha_k(S/I'S) - \alpha_k((I',u)/I'S).
\end{equation}
Let $v\in S$ be a squarefree monomial. Then $v\in (I',u)\setminus I'$ if and only if $v\mid u$ and $u=vw$ with $w\in S'\setminus I'$.
It follows that $\alpha_k((I',u)/I'S)=\alpha_{k-s}(S'/I')$, and therefore from \eqref{orbete} we get
\begin{equation}\label{orbete2}
\alpha_k(S/(I',u))=\alpha_k(S/I'S)-\alpha_{k-s}(S'/I')\text{ for }0\leq k\leq d,
\end{equation}
where $\alpha_{j}(S'/I')=0$ for $j<0$. If $k<s$ then $\alpha_k(S/(I',u))=\alpha_k(S/I'S)$ and
thus $\beta_k^q(S/(I',u))=\beta_k^q(S/I'S)$. Now, assume $k\geq s$.
From \eqref{betak} it follows that
\begin{equation}\label{sunca1}
\beta_k^{q+s}(S/(I',u))=\beta_q^{d+s}(S/I'S) - \sum_{j=s}^{k}(-1)^{k-j} \binom{q+s-j}{k-j} \alpha_{j-s}(S'/I')
\text{ for  }0\leq k\leq q.
\end{equation}
Using induction on $s\geq 1$, we can easily get
\begin{equation}\label{sunca2}
\beta_k^{q+s}(S/I'S) = \begin{cases} \beta_k^q(S'/I'),&\text{for } 0\leq k\leq q, \\
   \sum\limits_{\ell=0}^{k-q-1} \binom{k-q-1}{\ell}
   \alpha_{q+1+\ell}(S'/I') ,&\text{for }q+1\leq k\leq q+s .\end{cases}
\end{equation}
On the other hand, using the substitution $m:=j-s$, we get
\begin{multline}\label{sunca3}
  \sum_{j=s}^{k}(-1)^{k-j} \binom{q+s-j}{k-j} \alpha_{j-s}(S'/I') \\
  = \sum_{m=0}^{k-s} (-1)^{(k-s)-m}\binom{q-m}{(k-s)-m}\alpha_{m}(S'/I') =
\beta^q_{k-s}(S'/I'),
\end{multline}
$\text{ for }s\leq k\leq q$.
From \eqref{sunca1}, \eqref{sunca2} and \eqref{sunca3} we get the required conclusion.
\end{proof}

We recall the following result.

\begin{teor}[{\cite[Theorem 1.1]{asia1}}]\label{asiat11}
Let $I\subset S$ be a monomial ideal and $u\in S$ a monomial,
regular of $S/I$. Then:
$$\sdepth(S/(I,u))=\sdepth(S/I)-1.$$
\end{teor}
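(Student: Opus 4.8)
The plan is to prove the two inequalities $\sdepth(S/(I,u))\ge\sdepth(S/I)-1$ and $\sdepth(S/(I,u))\le\sdepth(S/I)-1$ separately, by induction on $\deg u$. First I would record the consequences of regularity. For a monomial $u$, being regular on $S/I$ is equivalent to $(I:u)=I$, i.e. $uw\in I$ forces $w\in I$; in particular $u\notin I$ (assuming $I\ne S$). Moreover, regularity means $\supp(u)$ meets no associated prime of $S/I$, so every variable $x_p\mid u$ is itself regular on $S/I$. Consequently $x_p$ divides no minimal generator of $I$, and writing $S'=K[x_i:i\ne p]$ and $I'=I\cap S'$ we get $S/I\cong(S'/I')[x_p]$ with $x_p$ a free variable. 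This settles the base case $u=x_p$: since $S/(I,x_p)\cong S'/I'$, the Herzog--Vl\u adoiu--Zheng formula $\sdepth(M[x_p])=\sdepth(M)+1$ (\cite[Lemma 3.6]{hvz}) gives $\sdepth(S/(I,x_p))=\sdepth(S'/I')=\sdepth(S/I)-1$.

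For the inductive step, fix $x_p\mid u$ and factor $u=x_p v$ with $\deg v=\deg u-1$; note that $v$ is again regular on $S/I$, its support being contained in $\supp(u)$. The key device is the graded short exact sequence
$$0\to\big(S/(I,x_p)\big)(-\deg v)\xrightarrow{\;\cdot v\;} S/(I,u)\to S/(I,v)\to 0.$$
Here the quotient map is the canonical surjection coming from $(I,u)\subseteq(I,v)$, whose kernel $(I,v)/(I,u)$ is spanned by the monomials lying in $(I,v)\setminus(I,u)$; each such monomial has the form $vw'$ with $x_p\nmid w'$ and $w'\notin I$, and multiplication by $v$ identifies this kernel with $S/(I,x_p)$ (shifted). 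By the base case the submodule has Stanley depth $\sdepth(S/I)-1$, and by the induction hypothesis (applicable since $\deg v<\deg u$ and $v$ is regular) so does the quotient $S/(I,v)$.

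For the lower bound I would exploit that this sequence splits as $\mathbb Z^n$-graded vector spaces: the monomial basis of $S/(I,u)$ partitions into the monomials of $(I,v)\setminus(I,u)$ (the image of the submodule) and the monomials outside $(I,v)$ (a copy of $S/(I,v)$). Because $(I,u)\subseteq(I,v)$, every Stanley piece occurring in a decomposition of $S/(I,v)$ remains a valid Stanley piece inside $S/(I,u)$, while the pieces of the submodule already live in $S/(I,u)$; concatenating optimal decompositions of the two ends therefore yields a Stanley decomposition of $S/(I,u)$ whose Stanley depth is at least the minimum of the two, namely $\sdepth(S/I)-1$. This gives $\sdepth(S/(I,u))\ge\sdepth(S/I)-1$.

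The hard part is the reverse inequality $\sdepth(S/(I,u))\le\sdepth(S/I)-1$, which is exactly what pins down the drop of precisely $1$. It cannot be read off from the exact sequence, because the vector-space splitting above is \emph{not} a splitting of $S$-modules (the ``$S/(I,v)$'' summand is not a submodule, since multiplying one of its monomials by a variable may push it into the kernel), and Stanley depth has no general upper-bound analogue of the depth lemma. I would instead argue directly, using that $x_p$ is a zerodivisor on $S/(I,u)$: writing $a=\deg_{x_p}(u)$ and $u'=u/x_p^{a}$, we have $x_p^{a}u'=u\equiv 0$ while $u'\notin(I,u)$, so some associated prime of $S/(I,u)$ contains $x_p$. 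The plan is to convert this into an upper bound for $\sdepth(S/(I,u))$ — for instance, after polarization, by showing that in every interval partition of the associated poset some interval is forced to have an upper bound of cardinality at most $\sdepth(S/I)-1$. Establishing this forced bound, i.e. ruling out that some clever decomposition of $S/(I,u)$ exceeds the expected value, is the main obstacle; the lower bound and the module-theoretic reductions above are comparatively routine.
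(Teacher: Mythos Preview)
The paper does not prove this theorem at all: it is merely quoted from Rauf~\cite{asia1} under the heading ``We recall the following result'', and is then used (via Proposition~\ref{p2}) as input for the quasi-depth bounds. So there is no ``paper's own proof'' to compare against; your proposal is being measured against a citation, not an argument.

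On the substance of your sketch: the base case and the lower bound $\sdepth(S/(I,u))\ge\sdepth(S/I)-1$ are fine and are exactly the routine parts you describe. The genuine gap is the one you yourself flag, and it is not small. Your proposed mechanism for the upper bound --- ``$x_p$ is a zerodivisor on $S/(I,u)$, hence some associated prime contains $x_p$, hence an interval partition is forced to have a short interval'' --- does not work as stated: the presence of a zerodivisor, or of an associated prime of a given height, gives no direct combinatorial obstruction on interval partitions, and there is no known ``upper depth lemma'' for $\sdepth$ that would convert your exact sequence into the inequality $\sdepth(S/(I,u))\le\sdepth(S/(I,v))$. The induction on $\deg u$ therefore stalls precisely at the step you need.

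What actually closes the gap (and what Rauf does, as the title of~\cite{asia1} suggests) is not an obstruction argument but a \emph{construction} in the opposite direction: from a Stanley decomposition of $S/(I,u)$ one builds a Stanley decomposition of $S/I$ with every piece enlarged by one variable, yielding $\sdepth(S/I)\ge\sdepth(S/(I,u))+1$. After polarization one may assume $I$ squarefree and $u$ a squarefree monomial with $\supp(u)\cap\supp(I)=\emptyset$; then, with $u=x_pv$, one uses the splitting $S/(I,v)\cong S/(I,u)\oplus v\cdot S/(I,x_p)$ together with the base case and the observation that the free variable $x_p$ can be adjoined to every Stanley space on the $S/(I,u)$ side. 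This constructive route (or, equivalently, Rauf's pretty clean filtration argument) is what you are missing; the zerodivisor heuristic will not get you there.
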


It is natural to ask if a similar result holds in the framework of Hilbert depth.
The following result is the best we can expect.

\begin{teor}\label{tu}
Let $I\subset S$ be a monomial ideal and $u\in S$ a monomial, regular of $S/I$. Then:
\begin{enumerate}
\item[(1)] $\qdepth(S/I)\geq \qdepth(S/(I,u)) \geq \qdepth(S/I)-1$.
In particular, if $u$ is a variable then $\qdepth(S/(I,u))=\qdepth(S/I)-1$.
\item[(2)] $\qdepth((I,u))\geq \min\{\qdepth(I),\qdepth(S/I)\}$.
\end{enumerate}
\end{teor}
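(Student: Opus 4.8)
The plan is to reduce everything to the squarefree case (legitimate, since polarization commutes with adjoining the variable $u$ when $u$ is regular, and by Theorem \ref{hvz} adjoining new variables shifts $\qdepth$ by the expected amount) and then work entirely with the numerical invariants $\alpha_k$ and $\beta_k^d$. Once $I$ and $u$ are squarefree, the key observation is that $u$ being regular on $S/I$ means $u=x_{i_1}\cdots x_{i_s}$ with none of the variables $x_{i_j}$ dividing any generator of $I$; after relabeling we may split off those $s$ variables and write $S=S'[x_{m+1},\dots,x_n]$ with $I=I'S$ for $I'\subset S'=K[x_1,\dots,x_m]$ and $u=x_{m+1}\cdots x_n$. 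This is exactly the setup of Lemma \ref{kkk}, which gives closed formulae for $\beta_k^{d+s}(S/(I,u))$ in terms of the $\beta_j^d(S'/I')$ and the $\alpha_j(S'/I')$.

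For part (1), first I would handle the lower bound $\qdepth(S/(I,u))\geq\qdepth(S/I)-1$. Set $d_0:=\qdepth(S'/I')$, so that $\qdepth(S/I)=\qdepth(S/I'S)=d_0+s$ by Theorem \ref{hvz}. Taking $d=d_0$ in Lemma \ref{kkk}, I need to check that $\beta_k^{d_0+s-1}(S/(I,u))\geq 0$ for all $0\leq k\leq d_0+s-1$; the Lemma's formula (applied with $d=d_0-1$, noting $(d_0-1)+s=(d_0+s)-1$) expresses each such $\beta$ as a nonnegative combination of $\alpha$'s of $S'/I'$ in the range $k\ge d$, minus $\beta_{k-s}^{d_0-1}(S'/I')$; I would argue the subtracted term is controlled because passing from $d_0$ to $d_0-1$ only ``helps'' (the $\beta_k^{d-1}$ values dominate the $\beta_k^d$ values in the relevant range — this is the kind of monotonicity already exploited in Proposition \ref{p1} and Theorem \ref{hvz}). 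For the upper bound $\qdepth(S/(I,u))\leq\qdepth(S/I)$, I would take $d=d_0+1$ in Lemma \ref{kkk} and exhibit some index $k$ with $0\le k\le d_0+s+1$ for which $\beta_k^{d_0+s+1}(S/(I,u))<0$: since $\qdepth(S'/I')=d_0$, some $\beta_{k_0}^{d_0+1}(S'/I')<0$, and the Lemma transports this negativity (in the range $k\le d$) directly to $S/(I,u)$, with the added $\alpha$-terms not interfering because they only appear for $k>d$. The special case ``$u$ a variable'' is $s=1$, where the two inequalities plus Theorem \ref{hvz} pin down the value; alternatively it follows directly from Theorem \ref{hvz} since $(I,x_n)\cong$ extension of an ideal in one fewer variable.

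For part (2), I would use the short exact sequence $0\to (I:u)/I\to S/I\to S/(I,u)\to 0$ is not quite what I want; instead use $0\to (I,u)/I\to S/I\to S/(I,u)\to 0$ together with $(I,u)/I\cong u\cdot(S/(I:u))$, and since $u$ is regular $(I:u)=I$, so $(I,u)/I\cong u\cdot(S/I)$. Hence $P_{S/I}=P_{(I,u)/I}\cup P_{S/(I,u)}$ disjointly, and by Proposition \ref{pp21} applied to the complementary decomposition, together with Proposition \ref{pp22} (which gives $\qdepth((I,u)/I)=\qdepth(u(S/I))=\qdepth(S/I)$ — wait, here $u$ need not involve only ``new'' variables, but regularity lets us reduce to the split situation as above, where Proposition \ref{pp22} applies), I obtain $\qdepth((I,u))\ge\min\{\qdepth((I,u)/I),\qdepth(I)\}=\min\{\qdepth(S/I),\qdepth(I)\}$ from the exact sequence $0\to I\to (I,u)\to (I,u)/I\to 0$ and Proposition \ref{pp21}.

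\textbf{Main obstacle.} The delicate point is the lower bound in (1): verifying $\beta_k^{d_0+s-1}(S/(I,u))\ge 0$ for all $k$, because the formula in Lemma \ref{kkk} carries a \emph{subtracted} term $\beta_{k-s}^{d}(S'/I')$, and when we drop $d$ from $d_0$ to $d_0-1$ this term can a priori become more positive, i.e. the subtraction can grow. The crux is a monotonicity lemma of the form: if all $\beta_j^{d}(P)\ge 0$ then the ``remainder'' combination appearing after shifting $d$ down by one stays nonnegative — essentially because $\beta_k^{d-1}(P)=\beta_k^{d}(P)+\beta_{k-1}^{d-1}(P)$ and an easy induction. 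Making this bookkeeping precise, and checking the boundary ranges $d+1\le k\le d+s$ where the $\alpha_{d+1+\ell}(S'/I')$ terms enter, is where the real work lies; everything else is either a reduction or a direct transcription of Lemma \ref{kkk}.
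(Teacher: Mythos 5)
Your reduction to the squarefree split situation ($I=I'S$ with $I'\subset S'=K[x_1,\dots,x_m]$ and $u=x_{m+1}\cdots x_n$), your upper bound in (1) via Lemma \ref{kkk} with $d=d_0+1$, and your proof of (2) via $(I,u)=I\oplus (I,u)/I$ with $(I,u)/I\cong u(S/I)$ all coincide with the paper's argument (for the upper bound you should still take the index $k_0$ \emph{minimal} with $\beta_{k_0}^{d_0+1}(S'/I')<0$, so that the subtracted term $\beta_{k_0-s}^{d_0+1}(S'/I')$ is nonnegative and the difference is genuinely negative; you only discuss the $\alpha$-terms and never address this subtraction). The genuine gap is exactly where you flagged it: the lower bound $\qdepth(S/(I,u))\geq \qdepth(S/I)-1$. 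Lemma \ref{kkk} with $d=d_0-1$ gives, for $k\leq d_0-1$,
$$\beta_k^{d_0-1+s}(S/(I,u))=\beta_k^{d_0-1}(S'/I')-\beta_{k-s}^{d_0-1}(S'/I'),$$
a \emph{difference of two nonnegative numbers}. The monotonicity you invoke, $\beta_k^{d-1}=\beta_k^{d}+\beta_{k-1}^{d-1}$, does show that nonnegativity of all $\beta^{d}$ propagates downward to $\beta^{d-1}$, so each term separately is $\geq 0$; but it says nothing about their difference, and there is no apparent reason why $\beta_k^{d_0-1}(S'/I')\geq \beta_{k-s}^{d_0-1}(S'/I')$ should hold. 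The boundary range $d_0\leq k\leq d_0-1+s$ has the same defect. So the plan as written does not close, and the missing inequality is the actual content of the statement, not routine bookkeeping.

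The paper avoids this computation entirely for the lower bound. It filters: for $0\leq j\leq n-m$ set $I_j:=I+(x_{m+j+1}\cdots x_n)$, so that $(I,u)=I_0\subset I_1\subset\cdots\subset I_{n-m}=S$ and $S/(I,u)$ decomposes as the disjoint union of the posets $I_{j+1}/I_j$. Each such quotient is a shifted (monomial-multiple) copy of $S/(I,x_{m+j+1})$; since $x_{m+j+1}$ is a single variable regular on $S/I$, Theorem \ref{hvz} gives its quasi depth as $\qdepth(S/I)-1$, and Proposition \ref{pp22} shows the monomial shift does not change it. Proposition \ref{pp21} then yields $\qdepth(S/(I,u))\geq \min_j \qdepth(I_{j+1}/I_j)=\qdepth(S/I)-1$. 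You should either adopt this filtration argument or actually prove the inequality your route requires; as it stands, the lower bound in (1) is unproved.
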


\begin{proof}
(1) Without loss of generality, we may assume that $I=I'S$, where $I'\subset S'=K[x_1,\ldots,x_m]$ is a squarefree
monomial ideal, and $u=x_{m+1}\cdots x_n$. Let $j:=n-m$. We use induction on $j\geq 1$. If $j=1$ then $m=n-1$, $u=x_n$ and
$$S/(I,u)\cong S/(I',x_n)\cong S'/I'.$$
Thus, the result follows from Lemma~\ref{hvz}. 

Now, assume $j\geq 2$.
For $0\leq j\leq n-m$ we let $I_j:=I+(x_{m+j+1}\cdots x_{n})\subset S$.
We have the inclusions $$I=I_0\subset I_1\subset \cdots \subset I_{n-m-1}\subset I_{n-m}=S.$$
Therefore, we have 
\begin{equation}\label{cutzu1}
S/I\cong (I_1/I_0)\oplus(I_2/I_1)\oplus \cdots \oplus (I_{n-m}/I_{n-m-1}).
\end{equation}
Since $I_{j+1}/I_j\cong S/(I,x_{m+j+1})$ for all $j$ with $0\leq j\leq n-m-1$, from Theorem~\ref{hvz} it follows that
\begin{equation}\label{cutzu2}
\qdepth(I_{j+1}/I_j)=\qdepth(S/I)-1\text{ for }0\leq j\leq n-m-1.
\end{equation}
From \eqref{cutzu1}, \eqref{cutzu2} and Proposition~\ref{pp21} it follows that
$$\qdepth(S/(I,u))\geq \qdepth(S/I)-1.$$
Thus, in order to complete the proof, we have to show the other inequality.

Let $q:=\qdepth(S'/I')$. From Lemma~\ref{kkk}, we have 
\begin{equation}\label{coocoo}
\beta^{q+1+s}_k(S/(I,u)) = \beta_k^{q+1}(S'/I') - \beta_{k-s}^{q+1}(S'/I')\text{ for }0\leq k\leq q+1.
\end{equation}
Let $j_0:=\min\{\beta_j^{q+1}(S'/I')<0\}$. From \eqref{coocoo} it follows that
$$\beta^{q+1+s}_{j_0}(S/(I,u))<0,$$
and therefore $\qdepth(S/(I,u))\leq q+s=\qdepth(S/I)$, as required.

\medskip
(2) We write $(I,u)=I\oplus (I,u)/I$. Since $(I,u)/I = u(S/I)$, the result follows from
    Proposition~\ref{pp21} and Lemma~\ref{pp22}.
\end{proof}

\begin{obs}\rm
Let $I'\subset S'=K[x_1,\ldots,x_m]$ be a squarefree monomial ideal and $u=x_{m+1}\cdots x_n$, as
in the proof of Theorem~\ref{tu}. Let $I=I'S$. 
Let $q:=\qdepth(S'/I')$ and assume that
$$\alpha_{q+1}(S'/I')<\beta^q_{q+1-\deg(u)}(S'/I').$$
From Lemma~\ref{kkk} it follows that
$$\beta^{q+s}_{q+1}(S/(I,u))=\alpha_{q+1}(S'/I')-\beta^q_{q+1-\deg(u)}(S'/I')<0,$$
hence $\qdepth(S/(I,u))\leq \qdepth(S/I)-1$. Therefore, from Theorem~\ref{tu},
it follows that $\qdepth(S/(I,u))=\qdepth(S/I)-1$. However, in general this equality does not hold, 
as the following example shows.
\end{obs}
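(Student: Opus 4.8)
The statement has two parts: a conditional identity, valid under the displayed inequality between $\alpha_{d+1}(S'/I')$ and $\beta_{d+1-\deg(u)}^d(S'/I')$, and the assertion that without that hypothesis the identity $\qdepth(S/(I,u))=\qdepth(S/I)-1$ may fail. The plan is to handle the two parts separately.

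For the conditional part, put $s:=\deg(u)=n-m$. Iterating Theorem \ref{hvz} gives $\qdepth(S/I)=\qdepth(S'/I')+s=d+s$. Since $s\geq 1$, the index $k=d+1$ lies in the range $[d+1,d+s]$, so Lemma \ref{kkk} applies with this $d$ and this $k$, and the sum occurring there collapses to the single term $\binom{0}{0}\alpha_{d+1}(S'/I')$, whence
$$\beta_{d+1}^{d+s}(S/(I,u))=\alpha_{d+1}(S'/I')-\beta_{d+1-s}^d(S'/I').$$
By hypothesis the right-hand side is negative, and since $d+1\leq d+s$ this removes $d+s$ from the set of admissible values in Definition \ref{def1}; hence $\qdepth(S/(I,u))\leq d+s-1=\qdepth(S/I)-1$. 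The reverse inequality is Theorem \ref{tu}(1), and equality follows.

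For the failure of the identity in general, the plan is to exhibit an explicit pair $(I',u)$ with $\qdepth(S/(I'S,u))=\qdepth(S/I'S)$, i.e. one for which the first inequality of Theorem \ref{tu}(1) is attained. By Lemma \ref{kkk} this is equivalent to checking $\beta_k^{d+s}(S/(I,u))\geq 0$ for all $0\leq k\leq d+s$; spelled out, the inequalities with $k\leq d$ read $\beta_k^d(S'/I')\geq\beta_{k-s}^d(S'/I')$ and those with $k>d$ read $\sum_{\ell=0}^{k-d-1}\binom{k-d-1}{\ell}\alpha_{d+1+\ell}(S'/I')\geq\beta_{k-s}^d(S'/I')$. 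The structural point, and the reason the identity can break down, is that $P_{S'/I'}$ must have faces of cardinality strictly larger than $\qdepth(S'/I')$ --- so that Lemma \ref{13}(1) is a strict inequality --- in order that the top-degree counts $\alpha_{d+1+\ell}(S'/I')$ be large enough to compensate the subtracted quantities $\beta_{k-s}^d(S'/I')$; this is a genuine restriction, failing for instance when $I'$ is the Stanley--Reisner ideal of a triangle together with a disjoint point, where $\qdepth(S'/I')=2$ but $\alpha_3(S'/I')=1<\beta_1^2(S'/I')=2$, so a complex with a large enough top face is needed.

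Concretely I would test $I'=(x_1x_5,x_2x_5,x_3x_5,x_4x_5)\subset S'=K[x_1,\ldots,x_5]$, the Stanley--Reisner ideal of a tetrahedron together with a disjoint vertex, so that $P_{S'/I'}$ is the collection of all subsets of $\{1,2,3,4\}$ together with $\{5\}$ and $(\alpha_0,\ldots,\alpha_4)=(1,5,6,4,1)$; a short computation gives $\qdepth(S'/I')=2$, with a face of size $4>2$. Taking $u=x_6x_7$ in $S=K[x_1,\ldots,x_7]$, so $d=s=2$ and $d+s=4$, Lemma \ref{kkk} gives $\beta_k^4(S/(I,u))$ equal to $1,3,1,1,3$ for $k=0,1,2,3,4$ respectively, all nonnegative, while $\qdepth(S/I)=\qdepth(S'/I')+2=4$ by Theorem \ref{hvz}; hence $\qdepth(S/(I,u))=4=\qdepth(S/I)$ --- and the hypothesis of the statement does fail here, since $\alpha_3(S'/I')=4$ is not $<\beta_1^2(S'/I')=3$. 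The step I expect to be the real work is this last simultaneous nonnegativity check: keeping every $\beta_k^{d+s}(S/(I,u))$ nonnegative constrains the numbers $\alpha_k(S'/I')$ rather tightly, which is why a simplex of dimension at least three (rather than a triangle) together with one extra vertex is needed.
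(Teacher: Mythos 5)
Your argument for the conditional part is exactly the paper's: apply Lemma \ref{kkk} at $k=d+1$ (where the sum collapses to the single term $\binom{0}{0}\alpha_{d+1}(S'/I')$), observe that the hypothesis makes $\beta_{d+1}^{d+s}(S/(I,u))<0$, and combine with Theorem \ref{hvz} and Theorem \ref{tu}(1). One small point, equally implicit in the paper: excluding $d+s$ from the admissible set of Definition \ref{def1} only yields $\qdepth(S/(I,u))\leq d+s-1$ because Theorem \ref{tu}(1) already supplies the upper bound $d+s$; the set of admissible $d$'s is not a priori downward closed. Where you genuinely diverge is the counterexample to the equality: the paper takes $I'=(x_1,x_2)\cap(x_3,x_4)\cap(x_5,x_6,x_7)\subset K[x_1,\ldots,x_7]$ with $u=x_8x_9$ and verifies $\qdepth(S'/I')=3$ and $\qdepth(S/(I,u))=5$ by a CoCoA computation, whereas your $I'=(x_1x_5,x_2x_5,x_3x_5,x_4x_5)$ with $u=x_6x_7$ is checkable by hand: indeed $(\alpha_0,\ldots,\alpha_4)=(1,5,6,4,1)$, $\qdepth(S'/I')=2$ with $\beta_1^2=3$, and Lemma \ref{kkk} gives $\beta_k^4(S/(I,u))=1,3,1,1,3$ for $k=0,\ldots,4$, so $\qdepth(S/(I,u))=4=\qdepth(S/I)$. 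Your example is smaller and self-contained, which is an improvement over an appeal to machine computation, and your structural observation that $P_{S'/I'}$ must contain faces of cardinality strictly larger than $\qdepth(S'/I')$ for the equality to fail (so that the top-degree $\alpha$'s can compensate the subtracted $\beta$'s) is correct and does not appear in the paper.
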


\begin{exm}\rm
Let $I'=(x_1,x_2)\cap(x_3,x_4)\cap(x_5,x_6,x_7)\subset S'=K[x_1,x_2,\ldots,x_7]$. We also
consider the ideal $I:=I'S$ and the monomial $u=x_8x_9\in S$, where $S=K[x_1,x_2,\ldots,x_9]$.

Using CoCoA \cite{cocoa}, we computed $\qdepth(S'/I')=3$ and $\qdepth(S/(I,u))=5$.
Therefore, we have $\qdepth(S/I)=\qdepth(S/(I,u))$.
\end{exm}

Let $I\subset S$ be a squarefree monomial ideal with $G(I)=\{u_1,u_2,\ldots,u_m\}$. \

For any nonempty subset $J\subset [m]=\{1,2,\ldots,m\}$,
we let $$u_J:=\lcm(u_j\;:\;j\in J)\text{ and }d_J:=\deg(u_J).$$ 
Furthermore, we denote $u_{\emptyset}:=1$ and $d_{\emptyset}:=0$.
As usual, if $j<0$ and $m\geq 0$ we define $\binom{m}{j}:=0$.

\begin{teor}\label{includere}
With the above notations, we have 

\medskip\noindent
{\em(1)}\kern1cm  $\displaystyle
\alpha_k(I)=\sum_{\emptyset\neq J\subset [m]} (-1)^{|J|-1}
\binom{n-d_J}{k-d_J}$,

\medskip\noindent
{\em(2)}\kern1cm  $\displaystyle
\alpha_k(S/I)=\sum_{J\subset [m]} (-1)^{|J|} \binom{n-d_J}{k-d_J}$.

\medskip\noindent
{\em(3)}\kern1cm  $\displaystyle
\beta_k^q(S/I)=\sum_{J\subset [m],\;k\geq d_J} (-1)^{|J|} \binom{n-d_J-q+k-1}{k-d_J}$.
\end{teor}

\begin{proof}
(1) We use the inclusion-exclusion principle. Let $P=P_I\subset 2^{[n]}$.
We consider the subsets
\begin{equation}\label{pej}
P_j = \left\{A\in P\;:\;u_j: x_A=\textstyle\prod_{i\in A}x_i\right\}\text{ for }1\leq j\leq m.
\end{equation}
Moreover, if $J\subset [m]$ is nonempty, we denote $P_J=\bigcap_{j\in J}P_j$. In particular, $P_{\{j\}}=P_j$ for $1\leq j\leq m$.
From \eqref{pej} it follows that
\begin{equation}\label{peje}
P_J=\left\{A\in P\;:\;u_J : x_A = \textstyle\prod_{i\in A}x_i\right\}\text{ for  }\emptyset \neq J\subset [m].
\end{equation}
Since $P=P_1\cup P_2\cup \cdots \cup P_m$, it follows that
\begin{equation}\label{cardinal}
|P|=\sum_{\emptyset \neq J\subset [m]} (-1)^{|J|-1} |P_J|.
\end{equation}
On the other hand, since $\deg(u_J)=d_J$, the number of squarefree monomials of degree $k$ in $P_J$ is $\binom{n-d_J}{k-d_J}$,
that is, $\alpha_k(P_J)=\binom{n-d_J}{k-d_J}$. Therefore, the required formula follows from \eqref{peje} and \eqref{cardinal}.

\medskip
(2) This follows from (1) and the fact that 
$\alpha_k(S/I)=\binom{n}{k}-\alpha_k(I)=\binom{n-d_{\emptyset}}{k-d_{\emptyset}}-\alpha_k(I)$.

\medskip
(3) From (2) and \eqref{betak} it follows that
\begin{align*}
\beta_k^q(S/I)& =\sum_{j=0}^k (-1)^{k-j}\binom{q-j}{k-j}\sum_{J\subset [m]} (-1)^{|J|} \binom{n-d_J}{j-d_J} \\
& = \sum_{J\subset [m]} (-1)^{|J|} \sum_{j=0}^k (-1)^{k-j}\binom{q-j}{k-j} \binom{n-d_J}{j-d_J}.  
\end{align*}
If $d_J>k$ then $\sum_{j=0}^k (-1)^{k-j}\binom{q-j}{k-j} \binom{n-d_J}{j-d_J}=0$. If $d_J\leq k$ then,
using the substitution $\ell=j-d_J$ and the Chu--Vandermonde summation
(see e.g\@. \cite[Sec.~5.1, Eq.~(5.27)]{GrKPAA}), we get
\begin{align*} \sum_{j=0}^k (-1)^{k-j}\binom{q-j}{k-j} \binom{n-d_J}{j-d_J} &= \sum_{\ell=0}^{k-d_J} (-1)^{k-d_J-\ell} 
  \binom{(q-d_J)-\ell}{(k-d_J)-\ell}\binom{n-d_J}{\ell} \\ &= \binom{n-d_J-q+k-1}{k-d_J},
\end{align*}	
as required.
\end{proof}

\begin{obs}\rm
Let $0\subset I\subset J\subset S$ be two squarefree monomial ideals. From the decomposition $S/I = S/J \oplus J/I$, it follows
that $$\alpha_k(J/I)=\alpha_k(S/I)-\alpha_k(S/J)\text{ for  }0\leq k\leq n.$$
Therefore, applying Theorem~\ref{includere}, we can write $\alpha_k(J/I)$ in combinatorial terms of the degrees of the
least common multiple of the minimal monomial generators of $I$ and $J$, respectively.
\end{obs}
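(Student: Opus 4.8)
The plan is to split $J/I$ off from $S/I$ using the canonical short exact sequence and then feed the two smaller quotients into Theorem \ref{includere}.

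First I would record the additivity of the $\alpha$-numbers. The short exact sequence of $\mathbb Z^n$-graded $S$-modules
$$0\longrightarrow J/I\longrightarrow S/I\longrightarrow S/J\longrightarrow 0$$
splits in the category of $\mathbb Z^n$-graded $K$-vector spaces, which is exactly the assertion $S/I = S/J\oplus J/I$ invoked in the statement. At the level of posets this reads $P_{S/I}=P_{J/I}\cup P_{S/J}$ with $P_{J/I}\cap P_{S/J}=\emptyset$: indeed, a subset $A\subset[n]$ with $x_A\notin I$ satisfies either $x_A\in J$, in which case $x_A\in J\setminus I$ and $A\in P_{J/I}$, or $x_A\notin J$, in which case $A\in P_{S/J}$, and the two cases are mutually exclusive. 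Restricting to the subsets of cardinality $k$ and counting gives $\alpha_k(S/I)=\alpha_k(J/I)+\alpha_k(S/J)$ for every $0\leq k\leq n$, that is, $\alpha_k(J/I)=\alpha_k(S/I)-\alpha_k(S/J)$.

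Next I would make the combinatorial description explicit. Let $G(I)=\{u_1,\ldots,u_p\}$ and $G(J)=\{w_1,\ldots,w_q\}$ be the minimal monomial generating sets, and for $A\subset[p]$ and $B\subset[q]$ put $d_A:=\deg\lcm(u_i\;:\;i\in A)$ and $e_B:=\deg\lcm(w_j\;:\;j\in B)$, with $d_\emptyset=e_\emptyset=0$. Applying Theorem \ref{includere}(2) once to $I$ and once to $J$ yields
$$\alpha_k(S/I)=\sum_{A\subset[p]}(-1)^{|A|}\binom{n-d_A}{k-d_A}\quad\text{and}\quad \alpha_k(S/J)=\sum_{B\subset[q]}(-1)^{|B|}\binom{n-e_B}{k-e_B},$$
and subtracting the second expression from the first presents $\alpha_k(J/I)$ as an alternating sum of binomial coefficients whose entries depend only on the degrees of the least common multiples of subfamilies of $G(I)$ and of $G(J)$, which is the claim.

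I do not expect a genuine obstacle here. The only two points that deserve a word of justification are that the displayed short exact sequence really does split as $\mathbb Z^n$-graded vector spaces (so that each $\alpha_k$ is additive along it), and that Theorem \ref{includere}(2), stated for a single monomial ideal, applies verbatim to $I$ and to $J$; both are immediate. Everything else is bookkeeping with the inclusion-exclusion formula already established.
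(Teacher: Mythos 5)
Your argument is correct and is exactly the paper's (implicit) one: the disjoint decomposition $P_{S/I}=P_{J/I}\cup P_{S/J}$ gives additivity of the $\alpha_k$, and then Theorem \ref{includere}(2) is applied separately to $I$ and to $J$ and the results subtracted. You merely spell out in more detail what the remark leaves as a one-line justification; there is nothing to correct.
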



\begin{cor}\label{coro}
Let $I\subset S$ be a squarefree monomial complete intersection with $G(I)=\{u_1,u_2,\ldots,u_m\}$.
Let $d_j=\deg(u_j)$ for all $j$ with $1\leq j\leq n$. For a nonempty subset $J\subset [m]$ we let $d_J:=\sum_{j\in J}d_j$. Then we have

\medskip\noindent
{\em(1)}\kern1cm  $\displaystyle
\alpha_k(I)=\sum_{\emptyset\neq J\subset [m]} (-1)^{|J|-1} \binom{n-d_J}{k-d_J}$,

\medskip\noindent
{\em(2)}\kern1cm  $\displaystyle
\alpha_k(S/I)=\sum_{J\subset [m]} (-1)^{|J|} \binom{n-d_J}{k-d_J}$,

\medskip\noindent
{\em(3)}\kern1cm $\displaystyle
\beta_k^q(S/I)=\sum_{J\subset [m],\;k\geq d_J} (-1)^{|J|} \binom{n-d_J-q+k-1}{k-d_J}$.
\end{cor}

\begin{proof}
It is enough to notice that $u_J=\lcm(u_j\;:\;j\in J)=\prod_{j\in J}u_j$ and thus $d_J=\sum_{j\in J}d_j$, as required.
Subsequently we apply Theorem~\ref{includere}.
\end{proof}

We recall the following result.

\begin{teor}\label{sci}
Let $I\subset S$ be a monomial complete intersection, minimally generated by $m$ monomials. Then:
\begin{enumerate}
\item[(1)] $\sdepth(S/I)=n-m$; see Theorem~\ref{asiat11};
\item[(2)] $\sdepth(I)=n-\left\lfloor \frac{m}{2} \right\rfloor$; see \cite[Theorem~2.4]{shen}.
\end{enumerate}
\end{teor}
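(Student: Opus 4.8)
The plan is to treat the two statements separately. Part (1) is a clean iteration of Theorem \ref{asiat11}; part (2) combines the lower bound already available from Theorem \ref{cranz}(2) with a substantially harder upper bound, which is the real content of the statement.

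For (1), note that a monomial complete intersection minimally generated by $m$ monomials has the form $I=(u_1,\ldots,u_m)$ where the $u_j$ have pairwise disjoint nonempty supports, so that $u_1,\ldots,u_m$ is a regular sequence of monomials. Put $I_0:=(0)$ and $I_i:=(u_1,\ldots,u_i)$ for $1\le i\le m$, so $I_m=I$. Since the $u_j$ minimally generate $I$ we have $u_i\notin I_{i-1}$, the monomial $u_i$ is regular on $S/I_{i-1}$, and $I_i=(I_{i-1},u_i)$; hence Theorem \ref{asiat11} applies at every step and gives $\sdepth(S/I_i)=\sdepth(S/I_{i-1})-1$. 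Starting from $\sdepth(S/I_0)=\sdepth(S)=n$ and iterating $m$ times yields $\sdepth(S/I)=n-m$.

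For (2), since the supports of the $u_j$ are disjoint and nonempty we have $n\ge m$, hence $n-\lfloor m/2\rfloor\ge m-\lfloor m/2\rfloor=\lceil m/2\rceil\ge 1$, so $\max\{1,n-\lfloor m/2\rfloor\}=n-\lfloor m/2\rfloor$ and Theorem \ref{cranz}(2) already gives $\sdepth(I)\ge n-\lfloor m/2\rfloor$. It remains to prove $\sdepth(I)\le n-\lfloor m/2\rfloor$, and here is the approach I would take. Fix a Stanley decomposition $\mathcal D:\ I=\bigoplus_{i=1}^r m_iK[Z_i]$. First observe that if a monomial $w\in I$ divides a minimal generator $u_j$, then $u_k\mid w\mid u_j$ for some $k$, and disjointness of supports forces $u_k=u_j$, hence $w=u_j$; therefore the unique summand of $\mathcal D$ containing $u_j$ has base $m_i=u_j$, so $m$ of the summands are $u_jK[Z_j]$ for $j=1,\ldots,m$, with distinct bases. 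One then has to show that these subsets $Z_1,\ldots,Z_m$, together with all the remaining $Z_i$, cannot all have size $>n-\lfloor m/2\rfloor$: the monomials lying immediately above the generators are forced into summands whose $Z$-sets must omit further variables, and a double count that pairs up the $m$ generators (so that the variable spent by one pair cannot be reused) forces $\min_i|Z_i|\le n-\lfloor m/2\rfloor$. Carrying this out is exactly Shen's theory of upper-discrete partitions \cite{shen}; an equivalent route is to strip off, via the Herzog--Vl\u adoiu--Zheng formula $\sdepth(IS)=\sdepth(I)+(\#\,\text{added variables})$ \cite{hvz}, the variables outside $\bigcup_j\supp u_j$ and reduce to $\mathfrak m_m=(x_1,\ldots,x_m)$, where one invokes $\sdepth(\mathfrak m_m)=\lceil m/2\rceil$ (Bir\'o--Howard--Keller--Trotter--Young).

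The main obstacle is precisely this upper bound. None of the soft tools of the earlier sections suffice for it: a general monomial complete intersection is not combinatorially equivalent to $\mathfrak m_m$, the poset $P_I$ depends essentially on the degrees $d_j=\deg u_j$, and, as Remark \ref{noidci} signals, even the weaker invariant $\qdepth(I)$ can strictly exceed $n-\lfloor m/2\rfloor$, so any purely numerical, Hilbert-series type estimate is too coarse. Thus the partition-surgery argument of Shen, or the maximal-ideal case of Bir\'o et al. together with the reduction above, seems unavoidable for the $\le$ direction, whereas the $\ge$ direction and all of part (1) follow formally from the results recalled in this section.
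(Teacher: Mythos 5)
The first thing to say is that the paper does not prove Theorem \ref{sci} at all: it is explicitly a recalled result, with part (1) attributed to Theorem \ref{asiat11} (Rauf) and part (2) to Shen's theorem on upper-discrete partitions. So there is no internal proof to match your argument against; the only question is whether your proposal actually establishes the statement from the cited ingredients. Part (1) does: the observation that the $u_j$ have pairwise disjoint supports, hence form a regular sequence, so that Theorem \ref{asiat11} can be iterated from $\sdepth(S)=n$ down to $\sdepth(S/I)=n-m$, is correct and complete, and is precisely the derivation the parenthetical reference intends.

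Part (2) has a genuine gap. The lower bound $\sdepth(I)\geq n-\lfloor m/2\rfloor$ via Theorem \ref{cranz}(2) is fine (and your check that the maximum with $1$ is irrelevant here is correct). But the upper bound is never proved: your partition-surgery sketch (forced bases $u_jK[Z_j]$, then ``a double count that pairs up the $m$ generators'') is not an argument, and you ultimately defer to Shen --- which is legitimate as a citation, but then the ``proof'' of (2) is just the citation the paper already gives. More importantly, the alternative route you propose is incorrect as stated: the Herzog--Vl\u adoiu--Zheng adjunction $\sdepth(I\overline S)=\sdepth(I)+1$ only strips variables \emph{outside} $\bigcup_j\supp u_j$, and after doing so you are left with a complete intersection in $\sum_j\deg u_j$ variables whose generators still have their original degrees; this is the maximal ideal $\mathfrak m_m$ only when every $d_j=1$. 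Passing from arbitrary degrees $d_j$ to the Bir\'o et al.\ case is exactly the nontrivial content of Shen's theorem (and polarization does not collapse the degrees either), so it cannot be absorbed into the variable-adjunction lemma. Your own closing remark --- that $\qdepth(I)$ can strictly exceed $n-\lfloor m/2\rfloor$, cf.\ Remark \ref{noidci} --- correctly explains why no tool developed in this paper can supply that upper bound.
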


\begin{obs}\label{noidci}\rm
If $\mathbf m=(x_1,\ldots,x_n)$ is the maximal graded ideal of $S$, then, according to \cite[Theorem~2.2]{biro} and \cite[Example 3.4]{uli} we have
$$\qdepth(\mathbf m)=\sdepth(\mathbf m)=\left\lceil \frac{n}{2} \right\rceil.$$
Note that, according to the previous theorem, if $I$ is a monomial complete intersection, minimally generated by $m$ monomials,
then $\sdepth(I)=n- \left\lfloor \frac{m}{2} \right \rfloor$. It is natural to ask if this equality remains true if we replace
$\sdepth$ with $\hdepth$. The answer is no:

Let $I=(x_1,x_2,x_3,x_4,x_5x_6,x_7x_8)\subset S=K[x_1,\ldots,x_8]$. We have  
$$\qdepth(I)=6> 8 - \left\lfloor \frac{6}{2} \right\rfloor = 5 = \sdepth(I).$$
Hence, a result similar to Theorem~\ref{sci}(2) does not hold in the framework of Hilbert depth.
\end{obs}

Note that, according to Theorems~\ref{cranz} and~\ref{sci}, the case of monomial complete intersections $I\subset S$
gives minimal values for $\sdepth(S/I)$ and $\sdepth(I)$ in terms of the number of minimal monomial generators of $I$.
In the following proposition, we note that a similar result holds for $\qdepth(S/I)$.

\begin{prop}\label{regulat}
Let $I\subset S$ be a monomial complete intersection, minimally generated by $m$ monomials. Then
 $\qdepth(S/I)=n-m$.
\end{prop}

\begin{proof}
This result can be easily deduced from the special form of the Hilbert series of $S/I$. Here we present an alternative proof:
Without any loss of generality, we may assume that $I$ is squarefree and, moreover, that $I$ has the minimal system of generators 
$G(I)=\{u_1,\ldots,u_m\}$ with $d_j=\deg(u_j)$ for $1\leq j\leq m$. Furthermore, since $I$ is minimally generated
by the monomials $u_1,\ldots,u_m$ with disjoint supports, it is easy to see that the maximal degree of a squarefree
monomial $u$ which is not contained in $I$ is $n-m$. Therefore, we have 
\begin{equation}\label{appa}
\alpha_k(S/I)=0\text{ and }\alpha_k(I)=\binom{n}{k}\text{ for  }n-m+1\leq k\leq n.
\end{equation}
In particular, from Lemma~\ref{13}, it follows that
$$\qdepth(S/I)\leq \max\{k\;:\;\alpha_k(S/I)>0\}=n-m.$$
On the other hand, $\qdepth(S/I)\geq \sdepth(S/I)=n-m$. Hence $\qdepth(S/I)=n-m$.
\end{proof}

In the following, we use the set up of Corollary~\ref{coro}.

\begin{lema}\label{magix}
Assume that $m\geq 2$. For any integer $k$, we have that
$$\sum_{J\subset [m]} (-1)^{|J|} \binom{m-2+k-d_J}{m-2}=0.$$
\end{lema}

\begin{proof}
We use induction on $m\geq 2$ and $n\geq 2$. If $m=2$ then the conclusion is trivial, since $\binom{m-2+k-d_J}{m-2}=\binom{k-d_J}{0}=1$.
Also, $n=2$ forces $m=2$. Now, assume that $n=m\geq 3$. It follows that $d_1=d_2=\cdots=d_n=1$. Therefore, the conclusion is equivalent to
\begin{equation}\label{will}
\sum_{J\subset [n]} (-1)^{|J|} \binom{n-2+k-d_J}{n-2}= \sum_{j=0}^n (-1)^j \binom{n}{j}\binom{n-2+k-j}{n-2}=0.
\end{equation}
In order to prove \eqref{will}, we write the last sum in standard
hypergeometric notation
(cf.~\cite{SlatAC})
\begin{equation} \label{eq:hyp} 
{}_r F_s\!\left[\begin{matrix} a_1,\dots,a_r\\ b_1,\dots,b_s\end{matrix}; 
z\right]=\sum _{k=0} ^{\infty}\frac {\po{a_1}{k}\cdots\po{a_r}{k}}
{k!\,\po{b_1}{k}\cdots\po{b_s}{k}} z^k\ ,
\end{equation}
where $(\alpha)_k:=\alpha(\alpha+1)\cdots(\alpha+k-1)$, $k\ge1$, and
$(\alpha)_0:=1$. We obtain
\begin{align*}
\sum_{j=0}^n (-1)^j \binom{n}{j}\binom{n-2+k-j}{n-2}&=
\binom {n-2+k}{n-2}
{}_2 F_1\!\left[\begin{matrix} -k,-n\\2-k-n\end{matrix}; 
1\right]\\
&=
\binom {n-2+k}{n-2}\lim_{\varepsilon\to0}
{}_2 F_1\!\left[\begin{matrix}
    -k-\varepsilon,-n\\2-k-\varepsilon-n\end{matrix};  
1\right].
\end{align*}
The $_2F_1$-series can be evaluated by means of the
Chu--Vandermonde summation in hypergeometric form
(see \cite[Eq.~(1.7.7); Appendix~(III.4)]{SlatAC}),
$$                                                                             {}_2 F_1\!\left[\begin{matrix} a,-N\\c\end{matrix}; 
1\right]  = {\frac {({ \textstyle c-a}) _{N} }
    {({ \textstyle c}) _{N} }}, 
$$
where $N$ is a nonnegative integer.
Thus, we get
$$
\sum_{j=0}^n (-1)^j \binom{n}{j}\binom{n-2+k-j}{n-2}=
\binom {n-2+k}{n-2}\lim_{\varepsilon\to0}
\frac {(2-n)_n} {(2-k-\varepsilon-n)_n}.
$$
The last expression is visibly zero since
$(2-n)_n=(2-n)(3-n)\cdots (-1)\cdot 0\cdot1$.
This verifies~\eqref{will}.

Now, assume that $n>m\geq 3$. Without any loss of generality, we may assume that $d_m\geq 2$.
We consider the sequence $d_1,\ldots,d_{m-1},d_m-1$. 

For $J\subset [m]$, we denote 
$$d'_J=\begin{cases} d_J,&\text{if }m\notin J,
\\ d_J-1,&\text{if }m\in J. \end{cases}$$
Using the induction hypothesis on $n$, it follows that
$$\sum_{J\subset [m]} (-1)^{|J|} \binom{m-2+k-d'_J}{m-2} = 0.$$
Moreover, we have
$$\sum_{J\subset [m],\;m\notin J} (-1)^{|J|} \binom{m-2+k-d_J}{m-2} + 
\sum_{J\subset [m],\;m\in J} (-1)^{|J|} \binom{m-2+k-d_J+1}{m-2} = 0.$$
On the other hand
\begin{align*}
\sum_{J\subset [m],\;m\in J} (-1)^{|J|} \binom{m-2+k-d_J+1}{m-2} =& 
\sum_{J\subset [m],\;m\in J} (-1)^{|J|} \binom{m-2+k-d_J}{m-2}\\
& + \sum_{J\subset [m],\;m\in J} (-1)^{|J|} \binom{m-2+k-d_J}{m-3}.
\end{align*}
Hence, in order to complete the proof, it is enough to show that 
\begin{equation}\label{shosho}
\sum_{J\subset [m],\;m\in J} (-1)^{|J|} \binom{m-2+k-d_J}{m-3}=0.
\end{equation}
For $J\subset[m]$ with $m\in J$, we denote $J'=J\setminus\{m\}$. 
Using the induction hypothesis on $m$, it follows that 
$$ \sum_{J\subset [m],\;m\in J} (-1)^{|J|} \binom{m-2+k-d_J}{m-3}= \sum_{J'\subset[m-1]} (-1)^{|J|} \binom{m-3 + (k-d_m+1) - d_{J'}}{m-3} = 0.$$
Hence \eqref{shosho} holds, as required.
\end{proof}

\begin{teor}\label{ccuccu}
Let $I\subset S$ be a squarefree monomial complete intersection, minimally generated by $m$ monomials.
Then $$\beta^{n-m+1}_k(S/I) + \beta^{n-m+1}_{n-m+1-k}(S/I) = 0\text{ for }0\leq k\leq n-m+1.$$
\end{teor}

\begin{proof}
If $m=1$ then $I=(x_1x_2\ldots x_m)$ and it is easy to check that $\beta^n_0(S/I)=1$, $\beta^n_k(S/I)=0$, for $1\leq k\leq n-1$,
and $\beta^n_n(S/I)=-1$. Hence, we get the conclusion. Assume $m\geq 2$. From Corollary~\ref{coro}(3) it follows that
\begin{equation}\label{ttt1}
\begin{split}
& \beta^{n-m+1}_k(S/I) = 
\sum_{J\subset [m],\;k\geq d_J} (-1)^{|J|} \binom{m-2+k-d_J}{m-2},\\
& \beta^{n-m+1}_{n-m+1-k}(S/I)=\sum_{J\subset [m],\;n-m+1-k\geq d_J} (-1)^{|J|} \binom{n-k-d_J-1}{m-2}.
\end{split}
\end{equation}
For $J\subset [m]$, we denote $\overline J=[m]\setminus J$. Note that $d_{\overline J}=n-d_J$. It follows that
$$ \beta^{n-m+1}_{n-m+1-k}(S/I) = \sum_{\overline J\subset [m],\;k+m-1 \leq d_{\overline J}} (-1)^{m-|\overline J|} \binom{d_{\overline J}-k-1}{m-2}. $$
Since $\binom{d_{\overline J}-k-1}{m-2} = (-1)^{m-2} \binom{m-2-k-d_{\overline J}}{m-2}$, we therefore get
\begin{equation}\label{ttt2}
 \beta^{n-m+1}_{n-m+1-k}(S/I) = \sum_{\overline J\subset [m],\;k+m-1 \leq d_{\overline J}} (-1)^{|\overline J|} \binom{m-2+k-d_{\overline J}}{m-2}.
\end{equation}
Note that, if $d$ is an integer with $k+1\leq d\leq k+m-2$, then $\binom{m-2+k-d}{m-2}=0$. Thus, from \eqref{ttt1} and \eqref{ttt2} it follows that
$$\beta^{n-m+1}_k(S/I)+\beta^{n-m+1}_{n-m+1-k}(S/I)=\sum_{J\subset [m]} (-1)^{|J|} \binom{m-2+k-d_J}{m-2}.$$
Now, the conclusion follows from Lemma~\ref{magix}.
\end{proof}

\begin{cor}\label{coro2}
Let $I\subset S$ be a squarefree monomial complete intersection, minimally generated by $m$ monomials.
Then $$\beta_{n-m+1}^{n-m+1}(S/I) = -1.$$ 
\end{cor}

\begin{proof}
From Theorem \ref{ccuccu}, it follows that 
$$\beta_{n-m+1}^{n-m+1}(S/I) = - \beta_{0}^{n-m+1}(S/I) = - \alpha_0(S/I) = -1.$$
\end{proof}

Note that, Corollary \ref{coro2} implies that $\hdepth(S/I)\leq n-m$, which was already known, of course.

\section{Squarefree Veronese ideals}

The aim of this section is to give a new proof, based on Theorem~\ref{teo1}, for the Hilbert depth of squarefree Veronese ideals;
see Theorem~\ref{teo3}. One novelty of our approach consists in using the machinery of hypergeometric functions. Also, in Theorem~\ref{krat1}
and Theorem~\ref{krat2}, we study the nonnegativity of the coefficients $b(n,m,q,t)$ defined in \eqref{bnmqt}, which could be useful per se,
not only as a mean of proving Theorem~\ref{teo3}.

\begin{dfn}
Let $n\geq m\geq 1$ be two integers. Let $J_{n,m}$ be the ideal in $S:=K[x_1,\ldots,x_n]$ generated by all squarefree monomials
of degree $m$. 
$J_{n,m}$ is called the {\it squarefree Veronese ideal of degree} $m$.
\end{dfn}

\begin{exm}\rm
If $n=4$ and $m=2$ then there are exactly $\binom{4}{2}=6$ squarefree monomials of degree $2$ in $S:=K[x_1,x_2,x_3,x_4]$.
Moreover, we have 
$$J_{4,2}=(x_1x_2,x_1x_3,x_1x_4,x_2x_3,x_2x_4,x_3x_4)\subset S=K[x_1,x_2,x_3,x_4].$$
\end{exm}


\begin{lema}\label{jnm}
With the above notations, we have the following:
\begin{enumerate}
\item[(1)] $\alpha_k(S/J_{n,m})=\begin{cases} \binom{n}{k},&\text{if } k<m, \\ 0,& \text{if }m\leq k\leq n.\end{cases}$
\item[(2)] $\alpha_k(J_{n,m})=\begin{cases} 0,& \text{if }k<m, \\ \binom{n}{k},&\text{if } m\leq k\leq n.\end{cases}$
\end{enumerate}
\end{lema}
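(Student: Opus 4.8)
The plan is to compute directly the number of squarefree monomials of each degree in $S/J_{n,m}$ and in $J_{n,m}$, using the combinatorial description of $\PP_{J/I}$ from Section 1. The key observation is that $J_{n,m}$ is generated by \emph{all} squarefree monomials of degree exactly $m$, so a squarefree monomial $x_A$ (with $A\subseteq[n]$) lies in $J_{n,m}$ if and only if $x_A$ is divisible by some squarefree monomial of degree $m$, which happens if and only if $|A|\geq m$. Equivalently, $x_A\notin J_{n,m}$ if and only if $|A|<m$. This gives $\PP_{S/J_{n,m}}=\{A\subseteq[n]:|A|<m\}$, while $\PP_{J_{n,m}}=\PP_{S}\setminus\PP_{S/J_{n,m}}=\{A\subseteq[n]:|A|\geq m\}$.

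From here, part (1) is immediate: for $k<m$ every $k$-subset $A$ of $[n]$ gives $x_A\in S\setminus J_{n,m}$, so $\alpha_k(S/J_{n,m})=|\{A\subseteq[n]:|A|=k\}|=\binom{n}{k}$; and for $m\leq k\leq n$ no $k$-subset survives, so $\alpha_k(S/J_{n,m})=0$. Part (2) follows either by the same direct count on $\PP_{J_{n,m}}$, or from the short exact sequence $0\to J_{n,m}\to S\to S/J_{n,m}\to 0$, which yields $\alpha_k(J_{n,m})=\alpha_k(S)-\alpha_k(S/J_{n,m})=\binom{n}{k}-\alpha_k(S/J_{n,m})$, giving $0$ for $k<m$ and $\binom{n}{k}$ for $m\leq k\leq n$.

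There is no real obstacle here: the only thing to be careful about is the characterization of membership in $J_{n,m}$, namely that divisibility of a squarefree monomial $x_A$ by \emph{some} degree-$m$ squarefree generator is equivalent to $|A|\geq m$. This is where the hypothesis that $J_{n,m}$ contains \emph{all} squarefree monomials of degree $m$ is used: once $|A|\geq m$, any $m$-element subset of $A$ gives a generator dividing $x_A$. I would state this equivalence explicitly as the first line of the proof and then read off both formulas.

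Alternatively, one could invoke Theorem \ref{includere} with the full list of $\binom{n}{m}$ generators, but the inclusion–exclusion sum is unnecessarily heavy for this case; the direct combinatorial argument above is cleaner and is what I would write.
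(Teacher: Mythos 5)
Your proposal is correct and follows essentially the same route as the paper: the paper's proof also identifies $\PP_{S/J_{n,m}}$ as the family of subsets $A\subseteq[n]$ with $x_A\notin J_{n,m}$, notes that membership in $J_{n,m}$ is equivalent to $|A|\geq m$ because the ideal is generated by all squarefree monomials of degree $m$, and deduces (2) from the complement $\PP_{J_{n,m}}=2^{[n]}\setminus \PP_{S/J_{n,m}}$. Your write-up merely makes the divisibility equivalence more explicit, which is a fine (and slightly more careful) presentation of the same argument.
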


\begin{proof}
(1) We have $P_{S/J_{n,m}}=\{A\subset [n]\;:\;x_A=\prod_{j=1}^n x_j \notin J_{n,m}\}$.
Since $J_{n,m}$ is generated by all squarefree monomials of degree $m$, the required conclusion follows.

(2) This follows from the fact that $P_{J_{n,m}}=2^{[n]}\setminus P_{S/J_{n,m}}$ and (1).
\end{proof}

\begin{prop}\label{kukuk}
With the above notations, we have 
$$\qdepth(S/J_{n,m})=\sdepth(S/J_{n,m})=m-1.$$
\end{prop}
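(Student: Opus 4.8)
The plan is to squeeze $\sdepth(S/J_{n,m})$ between $m-1$ and $m-1$. The upper bound is an immediate consequence of the machinery already in place, so the only real content is the lower bound $\sdepth(S/J_{n,m})\ge m-1$.

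For the upper bound, Lemma \ref{jnm}(1) gives $\alpha_k(S/J_{n,m})=\binom{n}{k}$ for $k<m$ and $\alpha_k(S/J_{n,m})=0$ for $m\le k\le n$. Since $1\notin J_{n,m}$, we have $J_{n,m}\subsetneq S$, so Lemma \ref{13}(1) yields $\qdepth(S/J_{n,m})\le\max\{k:\alpha_k(S/J_{n,m})>0\}=m-1$, and then Proposition \ref{p1} gives $\sdepth(S/J_{n,m})\le\qdepth(S/J_{n,m})\le m-1$.

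For the lower bound I would work directly with the poset $\PP_{S/J_{n,m}}=\{A\subseteq[n]:|A|\le m-1\}$ and build, by induction on $n$, an interval partition all of whose intervals $[C_i,D_i]$ have $|D_i|=m-1$; by the Herzog--Vl\u adoiu--Zheng identity $\sdepth(S/J_{n,m})=\sdepth(\PP_{S/J_{n,m}})$ recalled in Section~1, this forces $\sdepth(S/J_{n,m})\ge m-1$. The inductive step splits $\PP_{S/J_{n,m}}$ according to whether $n\in A$. The part with $n\notin A$ equals $\PP_{S'/J_{n-1,m}}$ where $S'=K[x_1,\dots,x_{n-1}]$ (or, in the edge case $n=m$, the full Boolean lattice $2^{[n-1]}$, covered by the single interval $[\emptyset,[n-1]]$, whose top has size $m-1$); by induction this part partitions into intervals with tops of size $\ge m-1$. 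The part with $n\in A$ consists of the sets $B\cup\{n\}$ with $B\in\PP_{S'/J_{n-1,m-1}}$; taking an interval partition of $\PP_{S'/J_{n-1,m-1}}$ with tops of size $\ge m-2$ (induction, with $m$ replaced by $m-1$) and adjoining $n$ to every set in it produces intervals with tops of size $\ge m-1$. The base case $m=1$ is trivial, since then $\PP_{S/J_{n,1}}=\{\emptyset\}$. Combining the two bounds gives $m-1\le\sdepth(S/J_{n,m})\le\qdepth(S/J_{n,m})\le m-1$, hence equality throughout.

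The main (and essentially only) obstacle is the lower bound, i.e.\ producing the explicit interval partition and checking the case analysis; the rest is a one-line consequence of Lemmas \ref{13} and \ref{jnm} together with Proposition \ref{p1}. As an alternative to the explicit construction, one could instead note that $S/J_{n,m}$ is the Stanley--Reisner ring of the $(m-2)$-skeleton of the simplex on $n$ vertices, hence Cohen--Macaulay of dimension $m-1$, and invoke a known Stanley-depth lower bound for skeletons; but the recursive construction above keeps the proof self-contained.
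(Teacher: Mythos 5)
Your proof is correct, and the upper-bound half ($\sdepth(S/J_{n,m})\le\qdepth(S/J_{n,m})\le m-1$ via Lemma \ref{jnm}(1), Lemma \ref{13}(1) and Proposition \ref{p1}) is exactly what the paper does. Where you diverge is the lower bound: the paper simply cites \cite[Theorem 1.1(2)]{vero} for the equality $\sdepth(S/J_{n,m})=m-1$, whereas you reprove the inequality $\sdepth(S/J_{n,m})\ge m-1$ from scratch by recursively partitioning $\PP_{S/J_{n,m}}=\{A\subseteq[n]:|A|\le m-1\}$ into intervals with tops of cardinality $m-1$, splitting on whether $n\in A$. Your recursion is sound: the $n\notin A$ part is $\PP_{S'/J_{n-1,m}}$ (or all of $2^{[n-1]}$ when $n=m$), the $n\in A$ part is the shift of $\PP_{S'/J_{n-1,m-1}}$ by $\{n\}$, which turns intervals with tops of size $m-2$ into intervals with tops of size $m-1$, and both base cases close. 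What your approach buys is self-containment — in effect you reprove the relevant half of the cited result of \cite{vero} — at the cost of a page of induction where the paper spends one line. One caution on your parenthetical alternative: deducing $\sdepth\ge m-1$ from the Cohen--Macaulayness of the skeleton would amount to invoking $\sdepth\ge\depth$, i.e.\ Stanley's conjecture, which the paper itself recalls is false in general; unless you cite a specific skeleton-to-Stanley-depth theorem, stick with the explicit partition, which is the part of your argument that actually works.
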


\begin{proof}
From \cite[Theorem 1.1(2)]{vero} it follows that $\sdepth(S/J_{n,m})=m-1$.
On the other hand, from Lemmas~\ref{13} and~\ref{jnm} it follows that $\qdepth(S/J_{n,m})\leq m-1$.
The conclusion follows from Proposition~\ref{p1}.
\end{proof}

\begin{prop}\label{proo}
We have $\qdepth(J_{n,m})\leq m+\left\lfloor \frac{n-m}{m+1} \right\rfloor$.
In particular, if $n\leq 2m$ then $\qdepth(J_{n,m})=m$.
\end{prop}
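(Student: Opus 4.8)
The plan is to establish the upper bound $\qdepth(J_{n,m})\leq m+\left\lfloor\frac{n-m}{m+1}\right\rfloor$ by exhibiting, for $d=m+\left\lfloor\frac{n-m}{m+1}\right\rfloor+1$, an index $k$ with $\beta_k^d(J_{n,m})<0$. Using Lemma \ref{jnm}(2), we have $\alpha_k(J_{n,m})=0$ for $k<m$ and $\alpha_k(J_{n,m})=\binom{n}{k}$ for $m\leq k\leq n$. Plugging this into the formula $\beta_k^d(J/I)=\sum_{j=0}^k(-1)^{k-j}\binom{d-j}{k-j}\alpha_j(J/I)$ from Lemma \ref{magie}(2), the first $m-1$ terms vanish, so that for $m\leq k\leq d$,
\begin{equation*}
\beta_k^d(J_{n,m})=\sum_{j=m}^k(-1)^{k-j}\binom{d-j}{k-j}\binom{n}{j}.
\end{equation*}
First I would analyze this alternating sum, looking for the smallest $k\geq m$ at which it turns negative.

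The key computational step is to get a closed form, or at least a usable sign estimate, for $B_k:=\sum_{j=m}^k(-1)^{k-j}\binom{d-j}{k-j}\binom{n}{j}$. A natural approach is a generating-function/finite-difference argument: the sum is essentially a $(k-m)$-fold alternating convolution, and one can hope to telescope it into a single binomial-type expression. Concretely, I would try to prove by induction on $k$ a formula of the shape $B_k=\binom{n}{k}-(\text{positive correction terms})$ that isolates when the correction overtakes the main term. Since $\beta_m^d=\binom{n}{m}>0$ always, the first interesting value is around $k=m+1$: there $\beta_{m+1}^d=\binom{n}{m+1}-(d-m)\binom{n}{m}$, which is negative exactly when $d-m>\frac{n-m}{m+1}$, i.e. when $d\geq m+\left\lfloor\frac{n-m}{m+1}\right\rfloor+1$ — precisely the threshold in the statement. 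So I expect the bound to come out cleanly from the single inequality $\beta_{m+1}^d(J_{n,m})<0$ for $d=m+\left\lfloor\frac{n-m}{m+1}\right\rfloor+1$, provided $d\geq m+1$ (the case $d<m+1$, i.e. $\lfloor(n-m)/(m+1)\rfloor=0$, being handled separately and giving the "in particular" clause, combined with Lemma \ref{13}(2) which yields $\qdepth(J_{n,m})\geq m$ since $\min\{k:\alpha_k(J_{n,m})>0\}=m$).

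For the "in particular" statement, if $n\leq 2m$ then $n-m\leq m<m+1$, so $\left\lfloor\frac{n-m}{m+1}\right\rfloor=0$ and the upper bound reads $\qdepth(J_{n,m})\leq m$; combined with the lower bound $\qdepth(J_{n,m})\geq \min\{k:\alpha_k(J_{n,m})>0\}=m$ from Lemma \ref{13}(2), we get equality. The main obstacle I anticipate is handling the general-$d$ argument rather than just $k=m+1$: one must confirm that $\beta_k^d\geq0$ is not already violated at some $k$ for values of $d$ below the claimed bound, but since the proposition only asserts an upper bound on $\qdepth$, it actually suffices to produce one bad $k$ at the critical $d$, so the delicate monotonicity analysis of the $B_k$ can be avoided entirely. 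Thus the proof reduces to the elementary inequality $\binom{n}{m+1}<(d-m)\binom{n}{m}$ at $d=m+\left\lfloor\frac{n-m}{m+1}\right\rfloor+1$, i.e. $\frac{n-m}{m+1}<\left\lfloor\frac{n-m}{m+1}\right\rfloor+1$, which is immediate, together with the observation that $d\geq m+1$ in that regime.
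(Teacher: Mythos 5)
Your proposal is correct and follows essentially the same route as the paper: both obtain the lower bound $\qdepth(J_{n,m})\geq m$ from Lemma \ref{13}(2) and the vanishing of $\alpha_k(J_{n,m})$ for $k<m$, and both derive the upper bound from the single explicit computation $\beta_{m+1}^d(J_{n,m})=\binom{n}{m+1}-(d-m)\binom{n}{m}$, which is negative precisely when $d>m+\frac{n-m}{m+1}$, i.e.\ for every $d\geq m+\left\lfloor\frac{n-m}{m+1}\right\rfloor+1$. The only cosmetic difference is that the paper splits off the case $\binom{n}{m+1}<\binom{n}{m}$ separately, which your uniform treatment via the floor renders unnecessary.
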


\begin{proof}
If $n=m$ then there is nothing to prove, so we may assume $n>m$.

Since $\alpha_1(J_{n,m})=\cdots=\alpha_{m-1}(J_{n,m})=0$, from Lemma~\ref{13} we get $\qdepth(J_{n,m})\geq m$.

Let $d$ be an integer with $m<d\leq n$. From Lemma~\ref{jnm} it follows that
\begin{equation}\label{cur}
 \beta_m^d(J_{n,m})=\binom{n}{m}\text{ and }\beta_{m+1}^d(J_{n,m})=\binom{n}{m+1}-(d-m)\binom{n}{m}.
\end{equation}
 If $\binom{n}{m+1}<\binom{n}{m}$, i.e., $m\geq \lfloor \frac{n+1}{2} \rfloor$, then, according to \eqref{cur},
it follows that $\beta_{m+1}^d(J_{n,m})<0$ and thus $\qdepth(J_{n,m}) = m$.

Now, assume this is not the case. From \eqref{cur} it follows that
$$\beta_{m+1}^d (J_{n,m}) < 0 \text{ if and only if } d>m+\frac{n-m}{m+1}.$$
Therefore, we get the required formula.
\end{proof}

\begin{prop}\label{proo2}
Let $m\geq 1$ and $n\geq 2m+1$ be two integers and let $q:=\left\lfloor \frac{n-m}{m+1} \right\rfloor$.
For $1\leq t\leq q$ we have 
$$\beta_{m+t}^{m+q}(J_{n,m}) = \sum_{j=0}^t (-1)^{t-j} \binom{q-j}{t-j}\binom{n}{m+j}.$$
\end{prop}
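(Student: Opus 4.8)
The plan is to apply Lemma \ref{magie}(2) directly with $d = m+q$ and then simplify using the explicit formula for the $\alpha_k(J_{n,m})$ from Lemma \ref{jnm}(2). By Lemma \ref{magie}(2), for any $0 \le k \le d = m+q$ we have
$$\beta_k^{m+q}(J_{n,m}) = \sum_{j=0}^k (-1)^{k-j} \binom{m+q-j}{k-j} \alpha_j(J_{n,m}).$$
We want this for $k = m+t$ with $1 \le t \le q$, so $k = m+t \le m+q = d$, and the lemma applies. The key input is that $\alpha_j(J_{n,m}) = 0$ for $j < m$ and $\alpha_j(J_{n,m}) = \binom{n}{j}$ for $m \le j \le n$ (Lemma \ref{jnm}(2)); note $m+t \le n$ since $t \le q \le n-m$, so all the surviving binomials $\binom{n}{j}$ with $m \le j \le m+t$ are genuine.

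First I would restrict the sum $\sum_{j=0}^{m+t}$ to its nonzero terms: only $j$ with $m \le j \le m+t$ survive, so writing $j = m + \ell$ with $0 \le \ell \le t$ gives
$$\beta_{m+t}^{m+q}(J_{n,m}) = \sum_{\ell=0}^t (-1)^{(m+t)-(m+\ell)} \binom{(m+q)-(m+\ell)}{(m+t)-(m+\ell)} \binom{n}{m+\ell} = \sum_{\ell=0}^t (-1)^{t-\ell} \binom{q-\ell}{t-\ell} \binom{n}{m+\ell}.$$
Renaming the index $\ell$ as $j$ yields exactly the claimed formula. That is essentially the whole argument; it is a short bookkeeping computation once Lemma \ref{magie}(2) and Lemma \ref{jnm}(2) are in hand.

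The only point that needs a moment's care — and the closest thing to an obstacle — is verifying that the hypothesis $0 \le k \le d$ of Lemma \ref{magie}(2) really holds for $k = m+t$, i.e. that $m + t \le m + q$, which is immediate from $t \le q$, and that the condition $n \ge 2m+1$ guarantees $q = \lfloor (n-m)/(m+1)\rfloor \ge 1$ so that the range $1 \le t \le q$ is nonempty (this makes the statement non-vacuous but is not logically needed for the identity itself). One should also note that the upper index $q - \ell$ in $\binom{q-\ell}{t-\ell}$ is nonnegative for $0 \le \ell \le t \le q$, so no degenerate-binomial conventions are invoked. I would therefore present the proof as: invoke Lemma \ref{magie}(2), substitute Lemma \ref{jnm}(2) to kill the terms with $j < m$, reindex by $j = m+\ell$, and read off the result.
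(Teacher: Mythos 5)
Your proof is correct and follows essentially the same route as the paper: the paper likewise invokes Lemma \ref{magie}(2) with $d=m+q$, uses Lemma \ref{jnm}(2) to discard the terms with index below $m$, and reindexes by $t=k-m$, $j=\ell-m$ to obtain the stated formula. Your additional remarks on the range checks are harmless bookkeeping that the paper leaves implicit.
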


\begin{proof}
Since $n\geq 2m+1$ it follows that $q\geq 1$. Let $d:=m+\left\lfloor \frac{n-m}{m+1} \right\rfloor=m+q$.
From Lemma~\ref{jnm}(2) and \eqref{betak}
we have $\beta_m^d(J_{n,m})=\binom{n}{m}$ and 
\begin{equation}\label{pussy}
\beta_k^{m+q}(J_{n,m})=\sum_{\ell=m}^{k} (-1)^{k-\ell}\binom{m+q-\ell}{k-\ell}\binom{n}{\ell}\text{ for }m+1\leq k\leq m+q.
\end{equation}
By letting $t=k-m$ and $j=\ell-m$, the conclusion follows from \eqref{pussy}.
\end{proof}

We want to prove that
\begin{equation}\label{bnmqt}
b(n,m,q,t):=\beta_{m+t}^{m+q}(J_{n,m})=\sum_{j=0}^t(-1)^{t-j}\binom {q-j}{t-j}\binom n{m+j}
\end{equation}
is non-negative for $m,q\ge1$, $1\le t\le q$, and $n\ge mq+m+q$.
We shall show something stronger, see Theorems~\ref{krat1} and~\ref{krat2} below.


\begin{teor}\label{krat1}
Let $t$ be even. Then
$b(n,m,q,t)\ge0$ for all non-negative integers $n,m,q,t$.
\end{teor}

\begin{proof}
We do a simultaneous induction on $n$ and $m$.

For the start of the induction, we need to verify the claim
for $m=0$ and for $n=0$. Indeed, by the Chu--Vandermonde summation formula, we have
\begin{align*}
b(n,0,q,t)&=\sum_{j=0}^t(-1)^{t-j}\binom {q-j}{t-j}\binom n{j}\\
&=\sum_{j=0}^t\binom {-q+t-1}{t-j}\binom n{j}\\
&=\binom {-q+t-1+n}t.
\end{align*}
For $-q+t-1+n\ge0$ this last binomial coefficient
is non-negative, regardless whether $t$ is
even or odd. Moreover, it is also non-negative for $-q+t-1+n<0$
if $t$ is even.

On the other hand, if $t$ is even, then we have $b(0,m,q,t)=\de_{m,0}\binom qt$,
where $\de_{m,0}$ is 1 if $m=0$ and 0 otherwise. Clearly, this shows that
$b(0,m,q,t)$ is non-negative.

For the induction step, we observe that, by the standard three-term recurrence
for binomial coefficients, we have
$$
b(n,m,q,t)=b(n-1,m,q,t)+b(n-1,m-1,q,t).
$$
This relation completes the induction.
\end{proof}

\begin{lema}\label{kratl}
We have
$$
b(n,m,q,t)=\sum_{j=0}^t
(-1)^{t - j}\frac { (m + t - j - 1)!\, n!\, (n - m - q + j - 
     1)!\, (q - j)!} {(m - 1)!\,(n - m)!\,(n - m - q - 1)!\,(t - j)!\,(m + 
       t)!\,(q - t)!}.
$$
\end{lema}

\begin{proof}
We write the binomial sum $b(n,m,q,t)$ in standard hypergeometric
notation~\eqref{eq:hyp}.
We obtain
$$
b(n,m,q,t)=
(-1)^t\binom qt\binom nm
{} _{3} F _{2} \!\left [ \begin{matrix} { m-n,1,-t}\\ { -q,m+1}\end{matrix} ;
   {\displaystyle 1}\right ]  .
$$
We now apply the transformation formula (see \cite[(3.1.1)]{GaRaAA})
$$
{} _{3} F _{2} \!\left [ \begin{matrix} { a, b, -m}\\ { d, e}\end{matrix} ;
   {\displaystyle 1}\right ]  = 
  {\frac{ ({ \textstyle e-b}) _{m} }{({ \textstyle e}) _{m} }}\,
{} _{3} F _{2} \!\left [ \begin{matrix} { -m, b, d-a}\\ { d, 1 + b - e
       - m}\end{matrix} ; { 1}\right ],
$$
where $m$ is a non-negative integer.
After some manipulation, one obtains the claimed expression.
\end{proof}

\begin{teor}\label{krat2}
Let $t$ be odd. Then
$b(n,m,q,t)\ge0$ for all non-negative integers $n,m,q,t$
with
\begin{equation}  
n\ge\max\{mq+m + q -(t-1)(m+1),mq+m +q-\tfrac {q(m-1)(t-1)}t\}.
\label{eq:CK1}
\end{equation}
\end{teor}

\begin{proof}
Clearly, $b(n,m,q,0)=\binom nm\ge0$. Furthermore, 
for $q<t$, we have
$$b(n,m,q,t)=\sum_{j=q+1}^t(-1)^{t-j}\binom {q-j}{t-j}\binom n{m+j}
=\sum_{j=q+1}^t\binom {t-q-1}{t-j}\binom n{m+j}>0.
$$
Hence, we may assume $t\ge1$ and $q\ge t$ from
now on.

We use the expression from the lemma.
We investigate the growth properties of the summand
(without sign)
$$
f(n,m,q,t,j)=\frac { (m + t - j - 1)!\, n!\, (n - m - q + j - 
     1)!\, (q - j)!} {(m - 1)!\,(n - m)!\,(n - m - q - 1)!\,(t - j)!\,(m + 
       t)!\,(q - t)!}.
$$
We claim that, under the assumption \eqref{eq:CK1}, $f(n,m,q,t,j)$ is
(weakly) monotone
increasing in $j$. Indeed, we have
$$
\frac {f(n,m,q,t,j)} {f(n,m,q,t,j-1)}
=\frac {(n- m  - q+j-1) (t-j+1)} {(q-j+1) (m +t-j)}.
$$
This will be at least 1 if
\begin{equation} 
n\ge 1 - 2 j + 2 m + 2 q + \frac {(q-t)(m-1) } {t-j+1}.
\label{eq:CK2}
\end{equation}
We consider the right-hand side as a function in $j$.
Differentiation of the right-hand side yields
$$
- 2  + \frac {(q-t)(m-1) } {(t-j+1)^2}.
$$
Equating this to zero, we arrive at a quadratic equation in $j$ with
two real solutions, one less than $t+1$, one greater than $t+1$
(here we use that $t\ge1$ and $q\ge t$).
Since the function (in $j$) on the right-hand side of \eqref{eq:CK2} tends to $+\infty$
as $j\to-\infty$ and as $j\to (t+1)^-$,
it is convex for $j\in[1,t]$.
The maximum on $[1,t]$ is therefore found as the greater of the values of the
right-hand side of \eqref{eq:CK2} at the boundary points $j=1$ and $j=t$.
Indeed, these two values are the ones of which the maximum is taken on the
right-hand side of \eqref{eq:CK1}.

Now we pair the summands in the sum that defines
$b(n,m,q,t)$, 
\begin{multline*} 
b(n,m,q,t)=\big(f(n,m,q,t,t)-f(n,m,q,t,t-1)\big)\\
+\big(f(n,m,q,t,t-2)-f(n,m,q,t,t-3)\big)+\cdots.
\end{multline*}
Given the just proved monotonicity of the summand, we see that each
pair produces a non-negative value, proving non-negativity of
$b(n,m,q,t)$ under the assumption \eqref{eq:CK1}.
\end{proof}

Now, we can reprove the main result of \cite{ge}.

\begin{teor}(\cite[Theorem 1.2]{ge}) \label{teo3}
Let $n\geq m\geq 1$. We have that:
$$\qdepth(J_{n,m})=m+\left\lfloor \frac{n-m}{m+1} \right\rfloor.$$
\end{teor}

\begin{proof}
Let $q=\left\lfloor \frac{n-m}{m+1} \right\rfloor$. From Proposition~\ref{proo} we have
$\qdepth(J_{n,m})\leq m+q$. The other inequality follows from Theorem~\ref{krat1}, Theorem~\ref{krat2}, \eqref{bnmqt}
and Proposition~\ref{proo2}.
\end{proof}

\begin{obs}\label{ultima_obs}\rm
In \cite{vero} we proposed the conjecture that $\sdepth(J_{n,m})=m+\left\lfloor \frac{n-m}{m+1} \right\rfloor$; see \cite[Conjecture 1.6]{vero},
and we proved that it holds for $n\leq 3m$; see \cite[Theorem~1.1]{vero} and \cite[Corollary~1.5]{vero}. Keller et al.\ 
improved this result to $n\leq 5m+3$; see \cite[Theorem~1.1]{keller}. Note that $n\leq 5m+3$ means $q\leq 3$.
In Theorem~\ref{teo3} we reproved that $\qdepth(J_{n,m})=m+\left\lfloor \frac{n-m}{m+1} \right\rfloor$ for any
$n\geq m\geq 1$. However, this does not imply that a similar result holds for $\sdepth$, 
but it shows that such a result is credible.
\end{obs}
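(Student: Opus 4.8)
The plan is to reduce Theorem \ref{teo3} to the verification of Conjecture \ref{conj} in the stated range and then to assemble the partial results already obtained. First I would recall that, by the proof of Proposition \ref{krokk}, once $n\geq 2m+1$ one has $\beta_{m+t}^{m+q}(J_{n,m})=E(m,q,t,n)$ for $1\leq t\leq q$ while $\beta_m^{m+q}(J_{n,m})>0$ and $\beta_j^{m+q}(J_{n,m})=0$ for $j<m$; together with the upper bound $\qdepth(J_{n,m})\leq m+q$ from Proposition \ref{proo}, this shows $\qdepth(J_{n,m})=m+q$ as soon as $E(m,q,t,n)\geq 0$ for all $1\leq t\leq q$ (the case $n\leq 2m$, i.e. $q=0$, being immediate from Proposition \ref{proo}). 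So it suffices to establish Conjecture \ref{conj} for our parameters. If $m=1$ this is Proposition \ref{cazm1}; hence we may assume $m\geq 2$, and then by Proposition \ref{reducere} it is enough to check $E(m,q,t,mq+m+q)\geq 0$ for $1\leq t\leq q$.

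Next I would split according to the hypothesis $q\leq\max\{m+1,5\}$. If $q\leq m+1$, then for $t=1$ Lemma \ref{l13} gives $E(m,q,1,mq+m+q)=0$, while for $2\leq t\leq q$ we have $q\leq m+1\leq m+t-1$, so Proposition \ref{pipo} yields $E(m,q,t,mq+m+q)\geq 0$. If instead $q\leq 5$, then for $1\leq t\leq\min\{q,4\}$ Proposition \ref{piro} gives $E(m,q,t,mq+m+q)\geq 0$, and the only remaining subcase is $t=q=5$, which is handled by Lemma \ref{l14} with $n=6m+5=mq+m+q$. Finally I would note the elementary equivalences $q\leq m+1\Longleftrightarrow n\leq m^2+4m+1$ and $q\leq 5\Longleftrightarrow n\leq 7m+5$ (from $\lfloor x\rfloor\leq k\Longleftrightarrow x<k+1$ for integer $k$), which translate the hypothesis into the claimed bound on $n$.

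The final assembly is routine once the preparatory statements are in place; the genuine work is upstream. The core obstacle is the sign analysis of $E(m,q,t,n)$: after reducing to $n=mq+m+q$ one rewrites $E$ via Proposition \ref{luppu} as an alternating sum of products of the ratios $\alpha(m,q,t,j)$, and one must control this sum using their monotonicity (Lemma \ref{lantz}) together with the estimates of Lemma \ref{cirip}. This is exactly what forces both Proposition \ref{pipo} (the regime $q\leq m+t-1$, where all $\alpha(m,q,t,j)\geq 1$, so the alternating sum is trivially nonnegative) and the ad hoc treatment of $t\leq 4$ in Proposition \ref{piro}. I do not see how to push the latter uniformly past $t=4$, which is precisely why Theorem \ref{teo3} is confined to $q\leq\max\{m+1,5\}$ rather than all $q$, leaving the general case dependent on Conjecture \ref{conj}.
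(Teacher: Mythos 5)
Your argument is correct and follows essentially the same route as the paper's own proof of Theorem \ref{teo3}: reduce via Proposition \ref{krokk} and Proposition \ref{reducere} to the nonnegativity of $E(m,q,t,mq+m+q)$, then settle $q\leq m+1$ with Proposition \ref{pipo} and $q\leq 5$ with Proposition \ref{piro} together with Lemma \ref{l14}. Your explicit handling of the boundary subcase $t=1$, $q=m+1$ via Lemma \ref{l13} (where Proposition \ref{pipo} does not literally apply and one gets $E=0$) is a small but genuine tightening of the paper's wording; otherwise the two proofs coincide.
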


\subsection*{Acknowledgements}

We gratefully acknowledge the use of the computer algebra system Cocoa (cf. \cite{cocoa}) for our experiments.

Mircea Cimpoea\c s was supported by a grant of the Ministry of Research, Innovation and Digitization, CNCS - UEFISCDI, 
project number PN-III-P1-1.1-TE-2021-1633, within PNCDI III.

\subsection*{Data availability}

Data sharing not applicable to this article as no data sets were generated or analyzed
during the current study.

\subsection*{Conflict of interest}

The authors have no relevant financial or non-financial interests to disclose.


\end{document}